 \newtheorem{rmk}{Remark}
\DeclareMathOperator{\vol}{vol}
\newcommand{\cA}{\mathcal{A}}
\newcommand{\cG}{\mathcal{G}}
\newcommand{\BB}{\mathcal{B}}
\newcommand{\ind}[1]{\mathbf{1}_{#1}}
\newcommand{\Exp}[1]{\mathbb{E}\left[#1\right]}
\newcommand{\Prob}[1]{\mathbb{P}\left(#1\right)}
\newcommand{\abs}[1]{\left\vert#1\right\vert}
\newcommand{\q}{q^*}
\newcommand{\loy}{\mathcal{L}}
\newcommand{\w}{\mathsf{w}}
\date{}
\begin{document}
\title{Approximating temporal modularity on graphs of small underlying treewidth}
%
%
\author{Vilhelm Agdur\inst{1}\orcidID{0000-0002-2278-6791} \and
Jessica Enright\inst{2}\orcidID{0000-0002-0266-3292} \and Laura Larios-Jones\inst{2}\orcidID{0000-0003-3322-0176} \and Kitty Meeks\inst{2}\orcidID{0000-0001-5299-3073} \and Fiona Skerman\inst{1}\orcidID{0000-0003-4141-7059} \and Ella Yates\inst{2}\orcidID{0009-0000-7979-0401}}
\authorrunning{V.~Agdur et al.}
\institute{Department of Mathematics, Uppsala University, Sweden \email{\{vilhelm.agdur, fiona.skerman\}@math.uu.se} \and
School of Computing Science, University of Glasgow, UK
\email{\{jessica.enright, laura.larios-jones, kitty.meeks\}@glasgow.ac.uk; 3032195Y@student.gla.ac.uk}}
\maketitle              
\begin{abstract}
Modularity is a very widely used measure of the level of clustering or community structure in networks.  
Here we consider a recent generalisation of the definition of modularity to temporal graphs, whose edge-sets change over discrete timesteps; such graphs offer a more realistic model of many real-world networks in which connections between entities (for example, between individuals in a social network) evolve over time.  
Computing modularity is notoriously difficult: it is NP-hard even to approximate in general, and only admits efficient exact algorithms in very restricted special cases. 
Our main result is that a multiplicative approximation to temporal modularity can be computed efficiently when the underlying graph has small treewidth.  This generalises a similar approximation algorithm for the static case, but requires some substantially new ideas to overcome technical challenges associated with the temporal nature of the problem.


\keywords{Modularity  \and Community structure \and Temporal graphs \and Parameterised algorithms \and Treewidth.}
\end{abstract}
\section{Introduction}


One of the most common methods for community detection on static graphs is \emph{modularity optimisation}, where we define an objective function on partitions of the vertices, and attempt to solve the resulting optimisation problem by various means. The computational problem is notoriously intractable: exact optimisation is known to be NP-Hard~\cite{nphard}, even restricted to $d$-regular graphs (for any $d\geq 9$) and two-part partitions~\cite{DASGUPTA201350}; moreover, it is NP-hard even to compute a multiplicative approximation \cite{dinh2015network}.  In the parameterised setting, the problem is W[1]-hard parameterised simultaneously by the pathwidth and feedback vertex number of the input graph \cite{w1hard}; the problem is however in FPT parameterised by either the vertex cover number \cite{w1hard} or max leaf number \cite{garvardtMaxLeaf24} of the input graph.  More generally, there is an fpt-algorithm parameterised by the treewidth of the input graph which computes a multiplicative approximation to the maximum modularity \cite{w1hard}.


In this paper we are concerned with computing a modularity-like quantity on networks which evolve over time.  In the real world, it is often the case that static graphs are not an appropriate model -- online social contacts change over time~\cite{Evkoski_Pelicon_Mozetic_Ljubesic_Kralj-Novak_2022}, different bird species are present in the winter and in the summer~\cite{pilosof2017multilayer}, and brain structure is affected by hormonal cycles~\cite{Mueller_Pritschet_Santander_Taylor_Grafton_Jacobs_Carlson_2021}. Temporal graphs~\cite{kempeTemporal02} are one of the most widely used models for such networks: a temporal graph is a pair $(G,\lambda)$ where $G$ is the underlying static graph and $\lambda:E(G) \rightarrow 2^{\mathbb{N}}$ assigns to each edge a (non-empty) set of timesteps at which it is active.  This model describes networks where the vertex-set remains unchanged but the edge-set changes over discrete timesteps.



The problem of community detection in temporal graphs has received increasing attention over the last twenty years, going from approaches that try to leverage community detection methods in the static case~\cite{Greene_Doyle_Cunningham_2010,Hopcroft_Khan_Kulis_Selman_2004,Palla_Barabási_Vicsek_2007,Rosvall_Bergstrom_2010,Sun_Faloutsos_Papadimitriou_Yu_2007} to online methods~\cite{Aynaud_Guillaume_2010,Folino_Pizzuti_2014,Lin_Chi_Zhu_Sundaram_Tseng_2008,Seifikar_Farzi_Barati_2020,Yuan_Zhang_Ke_Lu_Li_Liu_2025} and methods that construct an auxiliary static graph from the temporal graph and modify methods from the static case to apply to the auxiliary graph~\cite{Bassett_Porter_Wymbs_Grafton_Carlson_Mucha_2013,DiTursi_Ghosh_Bogdanov_2017,jian2023restrictedtweediestochasticblock,mucha2010community,paoletti2024benchmarkingevolutionarycommunitydetection,zhang2024fast}.

Here, we study temporal modularity as defined in Mucha et al.~\cite{mucha2010community} which generalises 
classic modularity 
to a temporal graph setting, and has become a standard method~\cite{Bianconi_2018,magnani2021analysis,Mueller_Pritschet_Santander_Taylor_Grafton_Jacobs_Carlson_2021,nicosia2013graph}.  This adaptation considers a partitioning of the vertices of the temporal graph which allows vertices to be in different parts at different times. The measure rewards partitions which have higher modularity within the static graph present at any particular time step and penalises moving vertices between parts. 
This measure exhibits a number of convenient properties: we mention several in \cref{sec:basic_facts}. Because the optimisation of this measure is a generalisation of the NP-hard static version, it is straightforwardly hard to optimise, thus motivating our interest in a parameterised approach. 




Our main result is that one of the most general positive results from the static setting can be adapted to this notion of temporal modularity.  Specifically, we show that there exists an fpt-algorithm, parameterised by treewidth of the underlying static graph (whose edge-set is the union of the edge-set at all timesteps), which computes a multiplicative approximation to the temporal modularity.  Similarly to the analogous result in the static case, this algorithm relies on a dynamic program over a tree decomposition of the underlying graph (albeit with slightly more involved accounting than is needed in the static case).  In the static case this was combined with an existing result that the maximum modularity can be approximated by considering only partitions with a constant number of parts; we prove an analogous result for the temporal case which we use here, but there is a further complication we must overcome for the temporal version.  In order to avoid the number of states that must be considered in the dynamic program over the tree decomposition becoming prohibitively large, we must restrict the number of timesteps being considered.  We show that we can in fact approximate the temporal modularity by considering separately the restriction of the temporal graph to short time-windows, provided these windows are chosen carefully.




The remainder of the paper is organised as follows.  In Section~\ref{sec:notation} we introduce notation and key definitions. In Section~\ref{sec:approx} we list the crucial lemmas that allow us to approximate temporal modularity by solving a collection of simpler problems in which both the number of permitted parts and the number of timesteps is bounded. In Section~\ref{sec:algorithm} we describe how to solve this simpler problem on graphs of small treewidth via a dynamic programming approach, and then apply the results of Section~\ref{sec:approx} to obtain our overall approximation algorithm. In Section~\ref{sec:conclusion} we conclude with some final observations and directions for future work.

\section{Notation and definitions}\label{sec:notation}
Here we introduce key definitions and notation. Throughout this work we use the Word-RAM model of computation, thus computing arithmetic operations on $\log n$-bit numbers in constant time.

A graph is a pair $G = (V,E)$ with vertex set $V$ and edge set $E$; we denote the size of the vertex set as $n = |V|$, and the size of the edge set as $m = |E|$. Graphs here are simple, loopless, and undirected.  The \emph{degree} of a vertex $v$, denoted $d_v$, is the number of other vertices it is adjacent to.  For a subset $A \subseteq V$ of the vertices of $G$, we write $E(A)$ for the set of edges between vertices in $A$, $e(A)$ for $\abs{E(A)}$, and $\vol(A) = \sum_{v \in A} d_v$. 

    Given a graph $G = (V,E)$ with $m=|E|\geq 1$ edges and a partition $\cA = \{A_1,A_2,\ldots,A_k\}$ of its vertices, the \emph{modularity} of this partition is given by
    $$q(G, \cA) = \sum_{A \in \cA} \frac{e(A)}{m} - \frac{\vol(A)^2}{(2m)^2},$$
    and the modularity of $G$ is the maximum over all partitions, that is,
    $$q^*(G) = \max_{\cA} q(G,\cA).$$
    For graphs $G'$ with no edges let $q(G',\cA)=0$ for any partition and $\q(G')=0$.

\subsection{Temporal graphs and temporal modularity}

    A \emph{temporal graph} $\cG = (G, \lambda)$ with lifetime $T$ consists of a graph $G = (V,E)$ and a function $\lambda: E \to 2^{[T]}$, assigning each edge a set of times at which it is active. For each $t \in [T]$, the \emph{snapshot} of $\cG$ at time $t$ is the graph $G_t = (V,E_t)$, where $E_t = \left\{e \in E \,\middle|\, t \in \lambda(e)\right\}$. The degree of a vertex $v$ in snapshot $G_t$ is denoted by $d_{v,t}$. We denote the number of vertices in a temporal graph as $n = |V|$, and the number of edges in a snapshot $G_t$ as $m_t = |E_t|$.

    A partition $\cA$ of a temporal graph into $k$ parts is a function $\pi_\cA: V \times [T] \to [k]$, assigning each vertex a labelled part at each time step. For each time $t$ we get a partition $\cA_t$ of $G_t$ by taking
    $$\cA_t = \left\{\pi_{\cA,t}^{-1}(1),\, \pi_{\cA,t}^{-1}(2),\,\ldots,\pi_{\cA,t}^{-1}(k)\right\},$$
    where $\pi_{\cA,t}(v) = \pi_\cA(v,t)$.  An example of such a partition is illustrated in Fig.~\ref{fig:example}.

   For a temporal graph $\cG$ and a partition $\cA$ of $\cG$, we say that a vertex $v$ is \emph{loyal} at time $t$ if $\pi_\cA(v,t) = \pi_\cA(v,t+1)$. We denote by $\loy(\cA)$ the total number of loyal vertices, over all times- we call this the \emph{loyalty contribution}. That is, if we define the \emph{indicator function} that $(v,t)$ and $(v',t')$ are in the same part by:
    $$\delta_\cA((v,t),(v',t')) = \begin{cases}
        1 &\text{if }\pi_\cA(v,t) = \pi_\cA(v',t')\\
        0 &\text{otherwise,}
    \end{cases}$$
    then we have
    $$\loy(\cA) = \sum_{v \in V} \sum_{t=1}^{T-1} \delta_\cA((v,t),(v,t+1)).$$

Adapting~\cite{mucha2010community}, specifically for temporal graphs, we can now define temporal modularity.

\begin{definition}
    Given a temporal graph $\cG = (G = (V,E),\lambda)$, a partition $\cA$ of $\cG$, and a tuning parameter $\omega \geq 0$, the \emph{temporal modularity} of $\cA$ is given by
    \begin{equation}\label{eq.defn_MRMPO}
        q_\omega(\cG, \cA) = \frac{1}{2\mu_\omega(\cG)} \left(\sum_{t=1}^T \sum_{A \in \cA_t} \left( 2e_{G_t}(A) - \frac{\vol_{G_t}(A)^2}{2m_t}\right) + \omega \loy(\cA)\right),
    \end{equation}
    where
    $$\mu_\omega(\cG) = \frac{1}{2}\omega n(T-1) + \sum_{t=1}^T m_t$$
    is a normalisation factor. The \emph{temporal modularity} of $\cG$ is given by $q^*_\omega(\cG) = \max_{\cA} q(\cG,\cA)$. 
\end{definition}
 We will often omit the dependence on $\omega$, and in most sections treat it as a given constant.

 \begin{figure}
        \centering
        \includegraphics[width=0.5\linewidth]{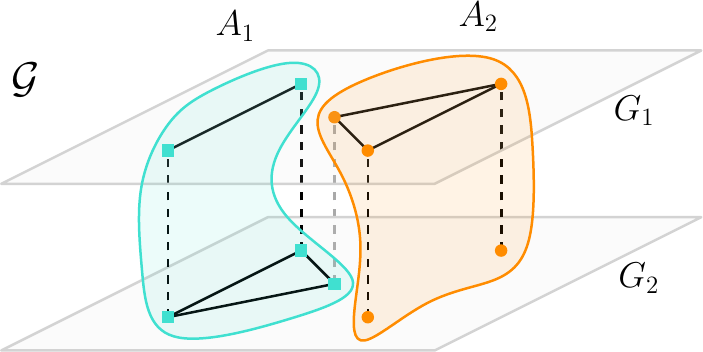}
        \caption{A temporal graph $\mathcal{G}$ together with a two part vertex partition $\cA=\{A_1, A_2\}$.  Here four of the five vertices are loyal at time $1$, as indicated with dashed black lines between the snapshots, while one vertex moves between parts.  Note also that, unlike in the case of static modularity, it is beneficial for a part to be disconnected at time $2$ as this increases the loyalty contribution.}
        \label{fig:example}
    \end{figure}
    
We sometimes consider the $k$-part temporal modularity or \emph{temporal k-modularity} of $\cG$, which is given by $q^*_{k,\omega}(\cG) = \max_{\cA, \abs{\cA} \leq k} q_\omega(\cG, \cA)$.

In our algorithms, it will often be more convenient to compute the non-normalised temporal modularity of a temporal graph (or temporal subgraph); for completeness, we now define this notion.

\begin{definition}
    Given a temporal graph $\cG = (G,\lambda) = ((V,E),\lambda)$, a partition $\cA$ of this graph, and a tuning parameter $\omega \geq 0$, the \emph{non-normalised temporal modularity} of $\cA$ is given by
    \begin{equation}
        \widetilde{q}_\omega(\cG, \cA) = \sum_{t=1}^T \sum_{A \in \cA_t} \left( 2e_{G_t}(A) - \frac{\vol_{G_t}(A)^2}{2m_t}\right) + \omega \loy(\cA).
    \end{equation}
    \end{definition}
    The \emph{non-normalised temporal modularity} of $\cG$ is given by $\widetilde{q}^{\,*}_\omega(\cG) = \max_{\cA} \widetilde{q}(\cG,\cA)$
    and 
    $\widetilde{q}^{\,*}_{k,\omega}(\cG)$$= \max_{\cA, \abs{\cA} \leq k} \widetilde{q}_\omega(\cG, \cA)$ gives the \emph{non-normalised temporal k-modularity} of $\cG$.



Our main result in this paper is an approximate fpt-algorithm  parameterised by treewidth. We use standard treewidth definitions and results as in \cite{cygan2015Algorithms}.
\begin{definition} \label{def:tree_decomp} \cite{cygan2015Algorithms}
    A \emph{tree decomposition} of a graph $G$ is a pair $(H,\mathcal{D})$ where $H$ is a tree and $\mathcal{D}=\{\mathcal{D}(b):b\in V(H)\}$ is a collection of subsets of $V(G)$, called \emph{bags}, such that:
\begin{enumerate}
    \item For every vertex $v \in V(G)$ there is at least one $b \in H$ such that $v \in \mathcal{D}(b)$.
    \item For every edge $uv \in E(G)$ there exists a node $b \in H$ such that $u,v \in \mathcal{D}(b)$.
    \item For every vertex $v$ the subgraph induced by the set of nodes $\{b: v \in \mathcal{D}(b)\}$ is connected.
\end{enumerate}
The \emph{width} $\w$ of the tree decomposition is the size of the largest bag minus 1.
\end{definition}
 The \emph{treewidth} is the minimum width over all possible tree decompositions of a graph. We use \emph{nice} tree decompositions, which have restrictions on the relationships between adjacent bags. Any tree decomposition can, in polynomial time, be transformed into a nice tree decomposition without increasing the width, and without loss of generality we can assume any nice tree decomposition of width $\w$ has $\mathcal{O}(\w n)$ nodes~\cite{cygan2015Algorithms}.

    \section{Basic properties of temporal modularity}\label{sec:basic_facts}

In this section, we record the analogues of some basic properties and approximation results from the static case for temporal modularity, and show that temporal modularity behaves in expected ways in simple cases. To begin with, if the temporal graph is actually the same graph at every time, its optimum partitions correspond exactly to the optimal partitions from the static case -- no unexpected new partitions appear, nor do any disappear.

\begin{lemma}
    For any temporal graph $\cG$ and any partition $\cA$, it holds that $-\frac{1}{2} \leq q(\cG,\cA) \leq 1$, and $q^*(\cG) \geq \frac{1}{2}\left(1 + \frac{\sum_{i=1}^T m_i}{\omega n(T-1)}\right)^{-1}$. These bounds are tight for every lifetime $T$ and every choice of tuning parameter $\omega$.
\end{lemma}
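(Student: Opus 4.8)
The plan is to reduce everything to the static modularity of the individual snapshots, for which the analogous bounds $-\frac12 \le q(G_t,\cA_t) \le 1$ are classical. The first step is the identity
$$\sum_{A\in\cA_t}\left(2e_{G_t}(A)-\frac{\vol_{G_t}(A)^2}{2m_t}\right) = 2m_t\, q(G_t,\cA_t),$$
obtained by multiplying the definition of $q(G_t,\cA_t)$ by $2m_t$. Together with $2\mu_\omega(\cG)=\omega n(T-1)+2\sum_{t=1}^T m_t$, this lets me rewrite
$$q(\cG,\cA) = \frac{\sum_{t=1}^T 2m_t\, q(G_t,\cA_t) + \omega\,\loy(\cA)}{\omega n(T-1)+2\sum_{t=1}^T m_t},$$
so that $q(\cG,\cA)$ is a weighted average of the snapshot modularities and the loyalty term, with weights $2m_t$ and $\omega$ respectively.

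For the two-sided bound I would combine the static bounds with the trivial bounds $0\le\loy(\cA)\le n(T-1)$. For the upper bound, using $q(G_t,\cA_t)\le 1$ and $\loy(\cA)\le n(T-1)$ makes the numerator at most $2\sum_t m_t + \omega n(T-1)$, which is exactly the denominator, giving $q(\cG,\cA)\le 1$. For the lower bound, using $q(G_t,\cA_t)\ge -\frac12$ and $\loy(\cA)\ge 0$ makes the numerator at least $-\sum_t m_t$, so
$$q(\cG,\cA)\ge \frac{-\sum_t m_t}{\omega n(T-1)+2\sum_t m_t}\ge -\frac12,$$
where the last inequality is equivalent to $0\le \omega n(T-1)$ and hence always holds. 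For the bound on $\q(\cG)$ I would simply evaluate the right-hand side at the trivial partition that places every vertex in a single part at every time: each snapshot then has static modularity $0$ and every vertex is loyal, so this partition attains $q(\cG,\cA)=\frac{\omega n(T-1)}{\omega n(T-1)+2\sum_t m_t}$, and a one-line manipulation shows this is at least $\frac12\left(1+\frac{\sum_t m_t}{\omega n(T-1)}\right)^{-1}$.

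For tightness I would exhibit, for each fixed $T$ and $\omega$, a family of instances approaching each bound. To see $q\le 1$ is tight, take every snapshot to be $k$ disjoint edges, with the partition putting each edge in its own part and keeping it fixed over time: every vertex is loyal and each snapshot has modularity $1-\frac1k$, so $q(\cG,\cA)\to 1$ as $k\to\infty$. To see $q\ge-\frac12$ is tight, take every snapshot to be $K_{n/2,n/2}$ and let the partition swap the two sides between the parts at consecutive times; then $\loy(\cA)=0$, each snapshot attains modularity exactly $-\frac12$, and a short computation gives $q(\cG,\cA)\to -\frac12$ as $n\to\infty$. For the bound on $\q(\cG)$, take every snapshot to be $K_n$: here the trivial partition is optimal, since complete graphs have static modularity $0$ and the constant partition simultaneously maximises loyalty, so $\q(\cG)$ equals the trivial value above and the ratio of $\q(\cG)$ to the claimed bound tends to $1$ as $n\to\infty$.

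The routine steps are the algebraic identity and the two inequalities, which are immediate once the static bounds are in hand. The main obstacle is the tightness claim for $\q(\cG)$: unlike the first two bounds, the trivial partition strictly beats the stated expression, so tightness can only hold in the limiting (ratio $\to 1$) sense, and justifying it requires the separate observation that no partition can outperform the trivial one on complete-graph snapshots. The static lower bound $q(G_t,\cA_t)\ge-\frac12$, if it is to be proved from scratch rather than cited, is the other place needing a careful (though standard) argument, via the inequality $e_{G_t}(A)\ge \vol_{G_t}(A)-m_t$ that bounds the internal edges of a part from below whenever it carries more than half the volume.
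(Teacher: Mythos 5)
Your proposal is correct and follows essentially the same route as the paper: rewrite $q(\cG,\cA)$ via the identity $\sum_{A\in\cA_t}\bigl(2e_{G_t}(A)-\vol_{G_t}(A)^2/(2m_t)\bigr)=2m_t\,q(G_t,\cA_t)$, invoke the classical static bounds $-\tfrac12\le q(G_t,\cA_t)\le 1$ together with $0\le\loy(\cA)\le n(T-1)$, witness the $q^*$ lower bound with a trivial partition, and use complete bipartite snapshots with relabelled parts for tightness of $-\tfrac12$. That said, your execution is cleaner or more complete than the paper's in four places. First, the paper describes the witness partition for the $q^*$ bound as ``each part contain[s] one single vertex at every time''; a singleton partition gives \emph{negative} snapshot modularity $-\sum_v d_{v,t}^2/(4m_t^2)$, not zero, so your all-in-one partition (every vertex in a single part at every time) is the correct witness. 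Second, your lower-bound derivation is a direct two-line inequality, whereas the paper's case split is on the sign of $\sum_i m_i + \tfrac{\omega}{2}n(T-1)$ -- a quantity that is always nonnegative, so as written the split is garbled (it should be on the sign of the numerator $\sum_i 2m_iq(G_i,\cA_i)$). Third, you explicitly exhibit a family (disjoint-edge snapshots, each edge its own fixed part) showing tightness of the upper bound $1$, which the paper's proof never addresses despite the statement claiming all bounds are tight. Fourth, you correctly observe that the $q^*$ lower bound is only tight in the limiting ratio sense -- the trivial partition attains $\omega n(T-1)/(\omega n(T-1)+2\sum_t m_t)$, which strictly exceeds the stated bound whenever $\omega>0$ -- and you supply the complete-graph construction plus the optimality argument needed to make that precise, where the paper asserts it more loosely.
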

\begin{proof}
    For normal modularity, it is well known that $-\frac{1}{2} \leq q(G,\cA) \leq 1$ for all graphs $G$ and partitions $\cA$, with equality achieved for the lower bound by a complete bipartite graph with $\cA$ as the two parts of the bipartition.

    So let $\cG$ and $\cA$ be arbitrary, and we can compute that
    \begin{align*}
        q(\cG,\cA) &= \frac{1}{2\left(\sum_{i=1}^T m_i + \frac{\omega}{2}n(T-1)\right)}\left(\omega \loy(\cA) + \sum_{i=1}^T 2m_iq(G_i,\cA_i)\right)\\
        &\geq \frac{\sum_{i=1}^T 2m_iq(G_i,\cA_i)}{2\left(\sum_{i=1}^T m_i + \frac{\omega}{2}n(T-1)\right)}.
    \end{align*}
    Now, if $\sum_{i=1}^T m_i + \frac{\omega}{2}n(T-1)$ is positive, then of course the entire expression is positive and thus greater than $-1/2$. So assume this expression is negative, and then we get that
    \begin{align*}
        \frac{\sum_{i=1}^T 2m_iq(G_i,\cA_i)}{2\left(\sum_{i=1}^T m_i + \frac{\omega}{2}n(T-1)\right)} & \geq \frac{\sum_{i=1}^T 2m_iq(G_i,\cA_i)}{2\sum_{i=1}^T m_i} \geq \frac{\sum_{i=1}^T 2m_i\left(-\frac{1}{2}\right)}{2\sum_{i=1}^T m_i} = -\frac{1}{2}.
    \end{align*}

    The upper bound is immediate from that $\loy(\cA) \leq n(T-1)$ and $q(G_t,\cA_t) \leq 1$, and the lower bound on $q^*(\cG)$ is achieved by letting each part contain one single vertex at every time, so we get modularity zero at each snapshot, but achieve all loyalty edges. If each snapshot of $\cG$ has optimal modularity zero, it is easily seen that this is in fact the optimal partition, proving tightness of this bound.

    Finally, let $\cG$ be the graph that is a complete bipartite graph $K_{n,n}$ at each time with lifetime $T$, and let $\cA$ at each timestep be the two parts of the bipartition, with no part appearing at more than one time. This will mean the modularity at each snapshot is $-\frac{1}{2}$, $\loy(\cA)$ is zero, and $m_i = n^2$ for every $t$, so we can compute
    \begin{align*}
        q(\cG,\cA) &= \frac{1}{2\left(\sum_{i=1}^T m_i + \frac{\omega}{2}n(T-1)\right)}\left(\omega \loy(\cA) + \sum_{i=1}^T 2m_iq(G_i,\cA_i)\right)\\
        &= \frac{1}{2(Tn^2 + \frac{\omega}{2}n(T-1)}\left(\sum_{i=1}^T 2n^2\left(-\frac{1}{2}\right)\right)\\
        &= \left(-\frac{1}{2}\right)\left(1 + \frac{\omega}{2n}\left(1 - \frac{1}{T}\right)\right)^{-1},
    \end{align*}
    and this is easily seen to go to $-\frac{1}{2}$ as $n$ goes to infinity, keeping $T$ and $\omega$ fixed, and so the bound on $q(\cG,\cA)$ we gave is tight for every choice of $T$ and $\omega$.
    \qed
\end{proof}

For the next result, we observe that the temporal modularity can be written more compactly as
\begin{equation}\label{eqn:temp_mod_as_sum_of_static_mod}
    q(\cG, \cA) = \frac{1}{2\mu}\left(\omega \loy(\cA) + \sum_{t=1}^T 2m_tq(G, \cA_t)\right),
\end{equation}
using the definition of the static modularity at each time.

\begin{lemma}
    Suppose $\cG=G_1, \ldots, G_T$ where $G_i=G$ for each $i=1, \ldots T$. Let $\mathcal{A}^{\rm opt}_G = \{ \cA_1, \cA_2, \ldots, \cA_r \}$ be the set of optimal partitions of $V(G)$. Then there are $r$ optimal partitions of $\cG$, which are those in which one of the partitions $\mathcal{A}^{\rm opt}_G$ is repeated at each level.
\end{lemma}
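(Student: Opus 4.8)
The plan is to exploit the decomposition recorded in \eqref{eqn:temp_mod_as_sum_of_static_mod}. Since every snapshot equals $G$, we have $m_t = m$ for all $t$ and the normalisation $\mu = \mu_\omega(\cG)$ is a fixed positive constant that does not depend on $\cA$. Hence maximising $q(\cG,\cA)$ over all partitions $\cA$ is equivalent to maximising
$$f(\cA) = \omega\,\loy(\cA) + 2m\sum_{t=1}^T q(G,\cA_t).$$
I would then bound the two summands separately and argue that they can be made tight simultaneously by exactly the partitions described in the statement.

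For the upper bound, observe that $\loy(\cA) \le n(T-1)$ always holds, and $q(G,\cA_t) \le q^*(G)$ for each $t$ directly from the definition of static modularity. Summing these gives $f(\cA) \le \omega n(T-1) + 2mT\,q^*(G)$. To see that this value is attained, take any $\BB \in \mathcal{A}^{\rm opt}_G$ and define the temporal partition that repeats $\BB$ at every level, i.e.\ let $\pi_\cA(v,t)$ be independent of $t$ and equal to the part label of $v$ in $\BB$. Then $\cA_t = \BB$ for all $t$, so each snapshot contributes $q(G,\cA_t) = q^*(G)$, and since no vertex ever changes part we get $\loy(\cA) = n(T-1)$; both bounds are tight, so this partition is optimal. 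Applying this to each of $\cA_1,\dots,\cA_r$ shows all $r$ repetitions are optimal, and since they differ already at the first time slice the map $\BB \mapsto (\text{its repetition})$ is injective.

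The remaining, and most delicate, step is the converse: every optimal partition of $\cG$ arises this way. If $\cA$ is optimal then $f(\cA)$ equals the upper bound above, which forces equality in \emph{both} of the inequalities we used. Equality $\loy(\cA) = n(T-1)$ means every vertex is loyal at every time, i.e.\ $\pi_\cA(v,t) = \pi_\cA(v,t+1)$ for all $v$ and all $t \in [T-1]$; hence the labelling is constant in time and the labelled partitions $\cA_1,\dots,\cA_T$ all coincide with a single partition $\BB$. Equality $q(G,\cA_t) = q^*(G)$ for every $t$ then says $\BB$ is an optimal partition of $G$, so $\BB \in \mathcal{A}^{\rm opt}_G$ and $\cA$ is precisely the repetition of $\BB$. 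Combined with the previous paragraph, this establishes that there are exactly $r$ optimal partitions of $\cG$.

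The one point genuinely requiring care is keeping the distinction between labelled and unlabelled partitions straight: the loyalty term is sensitive to the part \emph{labels}, not merely to the underlying set partition, whereas the snapshot modularity terms depend only on the unlabelled partition. Consequently, tightness of the loyalty bound forces the labels (and not just the induced set partitions) to agree across time, which is exactly what rules out spurious optima in which the same unlabelled partition reappears under a permuted labelling at different timesteps. This subtlety is the only obstacle; the two bounds themselves are immediate from the basic facts already recorded.
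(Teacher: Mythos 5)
Your proof is correct and follows essentially the same route as the paper: both rest on the decomposition \eqref{eqn:temp_mod_as_sum_of_static_mod}, bound the loyalty term by $\omega n(T-1)$ and each snapshot term by $q^*(G)$, observe that repeating an optimal static partition attains both bounds simultaneously, and conclude that any other partition falls short in at least one term. Your equality-forcing formulation (including the explicit point that loyalty tightness pins down the cross-time identification of parts, not just the per-snapshot set partitions) is a more rigorous rendering of the paper's informal three-case analysis, but it is not a different argument.
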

\begin{proof}
    If we pick one of the partitions in $\cA^{\rm opt}_G$ and repeat it at each level, we get the maximum loyalty bonus and the optimal modularity at each time, so the total temporal modularity will be
    $$\frac{1}{2\mu}\left(\omega n(T-1) + 2Tmq^*(G)\right),$$
    as can easily be seen referring to \eqref{eqn:temp_mod_as_sum_of_static_mod}.

    There are three possible ways to pick a partition which does not repeat the same optimal partition at each level:
    \begin{enumerate}
        \item We can pick a non-optimal partition to repeat at each level. In this case, we still get the full $\omega n(T-1)$ loyalty bonus, but the other term will decrease, since we picked a suboptimal partition to repeat.
        \item We can pick optimal partitions at each level, but choose different ones at different levels. In this case, we still get the full $2Tmq^*(G)$, but because we switch partitions, some vertices fail to be loyal, and we get a smaller loyalty bonus.
        \item We can choose differing partitions at different levels, \emph{and} not choose optimal partitions at each level. In this case, both the loyalty bonus and the static modularity term are decreased.
    \end{enumerate}

Because in all three cases the overall score is decreased we have our required result.  
    
    \qed
\end{proof}

It unfortunately turns out not to be the case that the temporal modularity of such a graph is equal to the static modularity, since the tuning parameter will play a role as well, but we do get a reasonably nice formula relating the two quantities.

\begin{rmk}
    If we consider a temporal graph $\cG$ which is a copy of the same graph $G$ at every time, we get a simple expression for the modularity of this in terms of the modularity of the static graph and the number of timesteps.

    Taking the expression we found in the proof above and simplifying further, we get that
    \begin{align*}
        q^*(\cG) &= \frac{1}{2\mu}\left(\omega n(T-1) + 2Tmq^*(G)\right)\\
        &= \frac{1}{2\left(\frac{1}{2}\omega n(T-1) + \sum_{t=1}^T m_t\right)}\left(\omega n(T-1) + 2Tmq^*(G)\right)\\
        &= \frac{\omega n(T-1) + 2Tmq^*(G)}{\omega n(T-1) + 2Tm}\\
        &= \frac{\omega n + 2mq^*(G) - \frac{\omega n}{T}}{\omega n + 2m - \frac{\omega n}{T}},
    \end{align*}
    which, for large $T$, is essentially a weighted average of $1$ and $q^*(G)$, with weights $\omega n$ and $2m$ respectively.
\end{rmk}

\section{Approximations to temporal modularity}\label{sec:approx}

In this section, we prove that we can make certain simplifying assumptions and still retain a multiplicative approximation to the temporal modularity.

First we note that the modularity of a graph may also be defined as a sum over pairs of vertices. Specifically, letting $\pi_\cA(v)$ denote the part that vertex $v$ belongs to in a partition $\cA$, and letting $\delta_\cA(u,v) = \ind{\pi_\cA(u) = \pi_\cA(v)}$, we have that
$$q(G, \cA) = \frac{1}{2m} \sum_{u,v \in G} \delta_\cA(u,v)\left(\ind{uv \in E(G)} - \frac{d_u d_v}{2m}\right).$$

Using this, and our definition of loyalty, we get the following alternative form of the definition of temporal modularity:

\begin{rmk}\label{remark:sum_form_def_tempmod}
    We may equivalently define the temporal modularity score as
    \begin{equation} \label{eqn:sum_form_def_tempmod}
        q_\omega(\cG,\cA) = \sum_{(u,t), (u', t') \in V(G)\times[T]} \delta_\cA((u,t),(u',t')) \kappa_\omega(u,u',t,t')
    \end{equation}
    where
    $$\kappa_\omega(u,u',t,t') = \frac{1}{2\mu(\cG)}\left(\omega\ind{t = t'-1}\ind{u = u'} + \ind{t = t'}\left(\ind{uu' \in E_t} - \frac{d_{u,t} d_{u',t}}{2m_t}\right)\right)$$
    is a constant that depends on $\cG$ and $\omega$ but not on the partition $\cA$.
\end{rmk}

Our next result is an analogue of the result of Dinh and Thai for the static setting \cite[Lemma 3]{Dinh_Thai_2013}, which says that we can approximate the maximum modularity by considering only partitions with a fixed number of parts.  We note that the Dinh-Thai result was previously exploited by Meeks and Skerman \cite{w1hard} to obtain an efficient approximation algorithm to compute static modularity on graphs of small treewidth.

\begin{lemma}\label{lem:k_part_approx_bound}
Let $\cG$ be a temporal graph with $n$ vertices and lifetime $T$ such that each snapshot has at least one edge. Fix an integer $k\geq 2$. Then
\[ q^*_{k,\omega}(\cG)\geq \left(1-\frac{1}{k}\right) q^*_{\omega}(\cG) + \frac{n\omega(T-1)}{k}. \]
\end{lemma}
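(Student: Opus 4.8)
The plan is to adapt the probabilistic merging argument of Dinh and Thai~\cite{Dinh_Thai_2013} to the temporal setting, working throughout with the pairwise sum form of temporal modularity from Remark~\ref{remark:sum_form_def_tempmod}. Let $\cA^*$ be a partition attaining $q^*_\omega(\cG)$, regarded as a partition of $V(G)\times[T]$ into its (at most $nT$) fibres under $\pi_{\cA^*}$. I would merge these fibres into at most $k$ groups by drawing, independently and uniformly, a map $f$ sending each fibre to a label in $[k]$; composing $\pi_{\cA^*}$ with $f$ produces a random partition $\cB$ with $\pi_\cB : V\times[T]\to[k]$, hence at most $k$ parts. The aim is to lower-bound $\Exp{q_\omega(\cG,\cB)}$ and then invoke the probabilistic method, since some realisation of $f$ must attain at least the expectation.

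Next I would compute this expectation by linearity over the pairs in \eqref{eqn:sum_form_def_tempmod}. For a pair $((u,t),(u',t'))$ lying in a common fibre of $\cA^*$ we have $\delta_\cB = 1$ deterministically, while for a pair in two distinct fibres the images under $f$ coincide with probability exactly $1/k$. As the weights $\kappa_\omega$ do not depend on the partition, linearity gives
$$\Exp{q_\omega(\cG,\cB)} = \sum_{\delta_{\cA^*}=1}\kappa_\omega + \frac{1}{k}\sum_{\delta_{\cA^*}=0}\kappa_\omega = \left(1-\frac1k\right)\sum_{\delta_{\cA^*}=1}\kappa_\omega + \frac1k\sum_{\text{all pairs}}\kappa_\omega.$$
The first sum is precisely $q_\omega(\cG,\cA^*)=q^*_\omega(\cG)$, so the whole computation reduces to evaluating the total weight $S := \sum_{(u,t),(u',t')}\kappa_\omega(u,u',t,t')$.

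The crux is this total-weight computation, and here the temporal setting genuinely departs from the static one. Splitting $\kappa_\omega$ into its snapshot part and its loyalty part, the snapshot part contributes, for each fixed time $t$, the quantity $\sum_{u,u'}\left(\ind{uu'\in E_t}-\frac{d_{u,t}d_{u',t}}{2m_t}\right) = 2m_t - \frac{(2m_t)^2}{2m_t} = 0$, exactly as in the static case, so the whole snapshot contribution vanishes (this is where the hypothesis that each snapshot has an edge is used, so the $m_t$ are nonzero). The loyalty part contributes $\frac{\omega}{2\mu_\omega(\cG)}$ times the number of ordered pairs with $u=u'$ and $t'=t+1$, namely $n(T-1)$. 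Hence $S = \frac{\omega n(T-1)}{2\mu_\omega(\cG)}$, and substituting back yields
$$\Exp{q_\omega(\cG,\cB)} = \left(1-\frac1k\right)q^*_\omega(\cG) + \frac{\omega n(T-1)}{2\mu_\omega(\cG)\,k},$$
which establishes the claimed inequality, with the loyalty term appearing normalised by $2\mu_\omega(\cG)$ as inherited from $\kappa_\omega$.

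I expect the only real obstacle to be the correct accounting of the loyalty term, the feature absent from Dinh--Thai. One must check that vertex-pairs loyal in $\cA^*$ remain loyal after merging (they share a fibre, which maps to a single group), so that randomisation can only add loyalty, and that each such pair is counted exactly once by the asymmetric indicator $\ind{t=t'-1}$ in \eqref{eqn:sum_form_def_tempmod}, giving the count $n(T-1)$ rather than $2n(T-1)$. Beyond this, the remaining steps are a direct transcription of the static argument: the clean cancellation of the snapshot weights at every $t$ is what makes the contribution of the differently-labelled pairs collapse to the single loyalty term.
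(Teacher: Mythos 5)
Your proposal is correct and follows essentially the same route as the paper's own proof: a Dinh--Thai-style random merging of the optimal partition's parts into $k$ labels, the expectation computed via the pairwise sum form of Remark~\ref{remark:sum_form_def_tempmod}, and the total-weight computation in which the snapshot terms cancel at each time (using $m_t \geq 1$) leaving only the loyalty count $n(T-1)$. Your closing observation that the additive term comes out as $\frac{\omega n(T-1)}{2\mu_\omega(\cG) k}$ rather than $\frac{n\omega(T-1)}{k}$ is also what the paper's own proof actually derives, so the two arguments agree exactly (and the normalisation factor appears to be missing from the lemma statement itself).
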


\begin{proof}
    We use the alternative definition of temporal modularity given in Remark~\ref{remark:sum_form_def_tempmod}. Let $\cA$ be an optimal partition of $\cG$ with an unlimited number of parts. We create a random partition $\BB$ of $\cG$ into $k$ parts by assigning each part of $\cA$ a random integer from $[k]$, and taking the union of all parts with the same number.

    Let us write
    $$c_{u,u',t,t'} = \omega\ind{t = t'-1}\ind{u = u'} + \ind{t = t'}\left(\ind{uu' \in E(G_t)} - \frac{d_{u,t} d_{u',t}}{2m_t}\right)$$
    so that \eqref{eqn:sum_form_def_tempmod} becomes
    $$2\mu(\cG) q(\cG,\BB) = \sum_{(u,t),(u',t') \in V(G)\times[T]} \delta_{\BB}((u,t),(u',t')) c_{u,u',t,t'},$$
    where we note that $c_{u,u',t,t'}$ does not depend on $\BB$.

    Thus, we get that
    \begin{align*}
        2\mu\Exp{q(\cG,\BB)} &= \sum_{(u,t),(u',t')\in V(G)\times [T]} \Exp{\delta_{\BB}((u,t),(u',t'))}c_{u,u',t,t'}\\
        &= \sum_{\substack{(u,t),(u',t')\in V(G)\times [T]\\\pi_\cA(u,t) = \pi_\cA(u',t')}} c_{u,u',t,t'}\\
        &\qquad + \sum_{\substack{(u,t),(u',t')\in V(G)\times [T]\\\pi_\cA(u,t) \neq \pi_\cA(u',t')}} c_{u,u',t,t'}\Prob{\pi_\BB(u,t) = \pi_\BB(u',t') \middle\vert \pi_\cA(u,t) \neq \pi_\cA(u',t')}\\
        &= \sum_{\substack{(u,t),(u',t')\in V(G)\times [T]\\\pi_\cA(u,t) = \pi_\cA(u',t')}} c_{u,u',t,t'} + \sum_{\substack{(u,t),(u',t')\in V(G)\times [T]\\\pi_\cA(u,t) \neq \pi_\cA(u',t')}} c_{u,u',t,t'}\frac{1}{k}\\
        &= \frac{1}{k}\sum_{(u,t),(u',t') \in V(G)\times [T]} c_{u,u',t,t'} + \left(1 - \frac{1}{k}\right)\sum_{\substack{(u,t),(u',t')\in V(G)\times [T]\\\pi_\cA(u,t) = \pi_\cA(u',t')}} c_{u,u',t,t'}\\
        &= \frac{1}{k}\sum_{(u,t),(u',t') \in V(G)\times [T]} c_{u,u',t,t'} + 2\mu\left(1 - \frac{1}{k}\right)q(G,\cA).
    \end{align*}

    So what remains to be done is to analyse the sum of our weights $c_{u,u',t,t'}$. We compute that
    \begin{align*}
        \sum_{(u,t),(u',t') \in V(G)\times [T]} c_{u,u',t,t'} &= \sum_{(u,t),(u',t') \in V(G)\times [T]} \omega\ind{t = t'-1}\ind{u = u'}\\
        &\qquad\qquad+ \ind{t = t'}\left(\ind{uu' \in E(G_t)} - \frac{d_{u,t} d_{u',t}}{2m_t}\right)\\
        &= n\omega(T-1) + \sum_{t=1}^T \sum_{u,u' \in V(G)} \ind{uu'\in E(G_t)} - \frac{d_{u,t} d_{u',t}}{2m_t}\\
        &= n\omega(T-1) + \sum_{t=1}^T \sum_{u\in V(G)} \left(d_{u,t} - d_{u,t}\frac{1}{2m_t}\sum_{u' \in V(G)} d_{u',t}\right)\\
        &= n\omega(T-1).
    \end{align*}
    Thus, in total we have that
    $$\Exp{q(G,\BB)} = \left(1 - \frac{1}{k}\right)q(G,\cA) + \frac{n\omega(T-1)}{2k\mu},$$
    which, since $\BB$ is by construction a partition into at most $k$ parts and $\cA$ an optimal partition, proves the result.
\qed
\end{proof}

We now prove two results that together show we can approximate the (non-normalised) temporal modularity by summing the temporal modularity of the graph restricted to suitably chosen short time windows.  We first introduce some notation for the restriction of a temporal graph to a specified time interval.  Given a temporal graph $\cG = (G,\lambda)$ with lifetime $T$, and integers $a,b$ with $1 \leq a \leq b \leq T$, we will write $\cG_{[a,b]}$ for restriction of $\cG$ to the time interval $[a,b]$; that is, $\cG_{[a,b]} = (G,\lambda')$ where, for all $e \in E(G)$, $\lambda'(e) = \lambda(e) \cap \{a,a+1,\dots,b\}$.

\begin{lemma}\label{lem:interval-approx-ub}
    Let $\cG$ be a temporal graph with lifetime $T$, and fix integers $0 = t_0 < t_1 < \cdots < t_{\ell} = T$.  Then
    $$ \sum_{i=1}^{\ell} \widetilde{q}^{\,*}(\cG_{[t_{i-1}+1,t_i]}) \leq \widetilde{q}^{\,*}(\cG).$$
\end{lemma}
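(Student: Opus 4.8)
The plan is to exhibit a single partition of the whole temporal graph $\cG$ whose non-normalised temporal modularity is at least the right-hand sum, and then invoke the definition of $\widetilde{q}^{\,*}(\cG)$ as a maximum over all partitions. Concretely, for each window $i \in \{1,\dots,\ell\}$ let $\cA^{(i)}$ be a partition of $\cG_{[t_{i-1}+1,\,t_i]}$ achieving the optimum $\widetilde{q}^{\,*}(\cG_{[t_{i-1}+1,\,t_i]})$. I would then \emph{concatenate} these into a global partition $\cA$ of $\cG$: for a timestep $t$ lying in window $i$ (that is, $t_{i-1}+1 \leq t \leq t_i$), let $\cA$ agree with $\cA^{(i)}$ on $t$, after relabelling parts so that distinct windows use disjoint label sets. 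The disjoint relabelling is the one small design choice that keeps the loyalty bookkeeping clean; it is harmless because the summands of $\widetilde{q}$ depend only on which vertex--time pairs share a label, not on the label names.

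Next I would check that the snapshot contributions are preserved \emph{exactly}. The key observation is that restricting the lifetime does not alter any snapshot lying inside a window: for $t \in [a,b]$ the edge set $E_t$ of $\cG_{[a,b]}$ equals that of $\cG$, so $G_t$, $m_t$, $e_{G_t}(A)$ and $\vol_{G_t}(A)$ all coincide. Since the summand $2e_{G_t}(A) - \vol_{G_t}(A)^2/(2m_t)$ depends only on the partition $\cA_t$ at time $t$, and $\cA_t = \cA^{(i)}_t$ for $t$ in window $i$, the total snapshot contribution of $\cA$ over $t \in [1,T]$ equals the sum over windows of the snapshot contributions of the $\cA^{(i)}$.

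The only remaining term is the loyalty. Here I would split the $T-1$ consecutive pairs $(t,t+1)$ into the within-window pairs and the $\ell-1$ boundary pairs $(t_i,t_i+1)$ for $1 \le i \le \ell-1$. Each within-window pair of $\cA$ contributes exactly as it does for the corresponding $\cA^{(i)}$, so these sum to $\sum_{i=1}^{\ell} \loy(\cA^{(i)})$. At each boundary pair the disjoint relabelling forces $\pi_\cA(v,t_i) \neq \pi_\cA(v,t_i+1)$ for every vertex $v$, so the boundary pairs contribute $0$. Hence $\loy(\cA) = \sum_{i=1}^{\ell} \loy(\cA^{(i)})$, and combining this with the snapshot identity yields $\widetilde{q}(\cG,\cA) = \sum_{i=1}^{\ell} \widetilde{q}(\cG_{[t_{i-1}+1,\,t_i]},\cA^{(i)}) = \sum_{i=1}^{\ell} \widetilde{q}^{\,*}(\cG_{[t_{i-1}+1,\,t_i]})$. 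Since $\widetilde{q}^{\,*}(\cG) \geq \widetilde{q}(\cG,\cA)$ by definition, the claim follows. (Even without the disjoint relabelling the argument still works: as $\omega \geq 0$, any boundary loyalty is non-negative, so $\widetilde{q}(\cG,\cA)$ is at least the sum; I prefer the disjoint version only because it produces a clean equality rather than an inequality.)

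I do not expect a deep obstacle here — the content is a careful decomposition rather than a clever trick. The only place to be genuinely careful is the convention for the lifetime and loyalty range of the restricted graph $\cG_{[a,b]}$: one must ensure that its loyalty counts exactly the consecutive pairs strictly inside the window $[a,b]$, so that the within-window loyalties partition cleanly and the boundary pairs are precisely the ones ``lost'' in the concatenation. Once that convention is fixed, the snapshot identity and the loyalty split are both routine.
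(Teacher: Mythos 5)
Your proposal is correct and follows essentially the same route as the paper: concatenate window-optimal partitions into a global partition $\cA$ and invoke $\widetilde{q}^{\,*}(\cG) \geq \widetilde{q}(\cG,\cA)$, noting that snapshot terms are preserved exactly and boundary loyalty is non-negative since $\omega \geq 0$. Your disjoint-relabelling variant is a cosmetic refinement (turning the paper's inequality at the boundaries into an equality), and your parenthetical remark is precisely the paper's argument.
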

\begin{proof}
    For each $1 \le i \le \ell$, let $\cA_i$ be a partition of $\cG_{[t_{i-1}+1,t_i]}$ such that $\widetilde{q}^{\,*}(\cG_{[t_{i-1}+1,t_i]}) = \widetilde{q}(\cG_{[t_{i-1}+1,t_i]},\cA_i)$.  Now define $\cA$ to be the partition of $\cG$ that agrees with each $\cA_i$ on timesteps in the range $[t_{i-1}+1,t_i]$ for each $1 \le i \le \ell$, and note that by definition of $\widetilde{q}^{\,*}(\cG)$ we have
    \[
        \widetilde{q}^{\,*}(\cG) \geq \widetilde{q}(\cG,\cA).
    \]
    Moreover, recall that
    \begin{align*}
        \widetilde{q}_\omega(\cG, \cA) &= \sum_{t=1}^T \sum_{A \in \cA_t} \left( 2e_{G_t}(A) - \frac{\vol_{G_t}(A)^2}{2m_t}\right) + \omega \loy(\cA) \\
            &\geq \sum_{i=1}^\ell \left( \sum_{t=t_{i-1}+1}^{t_i} \sum_{A \in \cA_t} \left( 2e_{G_t}(A) - \frac{\vol_{G_t}(A)^2}{2m_t}\right) + \omega \loy(\cA_i)\right) \\
            &= \sum_{i=1}^\ell \widetilde{q}(\cG_{[t_{i-1}+1,t_i]},cA_i)\\
            &= \sum_{i=1}^\ell \widetilde{q}^{\,*}(\cG_{[t_{i-1}+1,t_i]}),
    \end{align*}
    giving the result.\qed
\end{proof}

We now show that it is possible to choose values of $t_1,\dots,t_{\ell-1}$ so that each difference $t_i - t_{i-1}$ is small but we do not lose a large proportion of loyalty edges between timesteps $t_i$ and $t_{i+1}$. Recall we define the loyalty contribution as 
    $\loy(\cA) = \sum_{v \in V} \sum_{t=1}^{T-1} \delta_\cA((v,t),(v,t+1))$.
In the following lemma, we want to consider the contribution to this total from a single pair of consecutive timesteps; to this end, we define 
    $$\loy(\cA,t) = \sum_{v \in V} \delta_\cA((v,t),(v,t+1)).$$




\begin{lemma}\label{lem:interval-approx-lb}
    Let $\cG$ be a temporal graph with lifetime $T$, and let $d$ be a positive integer.  Then there exist integers $0 = t_0 < t_1 < \dots < t_\ell = T$ such that $t_j - t_{j-1} \leq 2d$ for each $1 \leq j \leq \ell$ and 
        $$ \sum_{i=0}^{\ell-1} \widetilde{q}(\cG_{[t_i+1,t_{i+1}]}) \geq \left(1 - \frac{1}{d}\right) \widetilde{q}^{\,*}(\cG).$$
\end{lemma}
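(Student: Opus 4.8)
The plan is to fix an optimal partition $\cA$ of the whole temporal graph, so that $\widetilde{q}^{\,*}(\cG) = \widetilde{q}(\cG,\cA)$, and then argue that the timeline can be cut at a small number of cheap places. The key observation is that restricting $\cG$ to an interval $[a,b]$ and using the restriction of $\cA$ leaves every snapshot term $2e_{G_t}(A) - \vol_{G_t}(A)^2/(2m_t)$ with $t \in [a,b]$ unchanged, since the snapshot $G_t$ and the count $m_t$ are unaffected by the restriction; the only loyalty terms retained are $\loy(\cA,t)$ for $a \le t \le b-1$. Hence, if we cut at interior boundaries $t_1 < \dots < t_{\ell-1}$, each snapshot appears in exactly one interval, and the only quantities lost relative to $\widetilde{q}(\cG,\cA)$ are the loyalty contributions $\loy(\cA,t_j)$ at the boundary timesteps. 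Since $\widetilde{q}^{\,*}(\cG_{[t_i+1,t_{i+1}]}) \ge \widetilde{q}(\cG_{[t_i+1,t_{i+1}]},\cA)$, this yields
$$ \sum_{i=0}^{\ell-1} \widetilde{q}^{\,*}(\cG_{[t_i+1,t_{i+1}]}) \ge \widetilde{q}^{\,*}(\cG) - \omega \sum_{j=1}^{\ell-1} \loy(\cA, t_j). $$

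The task thus reduces to choosing the boundaries so that every interval has length at most $2d$ while the total loyalty lost, $\sum_j \loy(\cA,t_j)$, is at most $\tfrac{1}{d}\loy(\cA)$. For this I would use a standard shifting argument over residue classes: for each shift $s \in \{0,1,\dots,d-1\}$, place boundaries at all $t \in \{1,\dots,T-1\}$ with $t \equiv s \pmod d$. Any such choice partitions the timeline into intervals of length at most $d \le 2d$, giving the length condition. Moreover each loyalty position $t \in \{1,\dots,T-1\}$ lies in exactly one residue class, so summing the incurred loss over the $d$ shifts gives exactly $\loy(\cA)$; by averaging, some shift $s^{*}$ incurs loss at most $\tfrac{1}{d}\loy(\cA)$, and we take the boundaries associated with $s^{*}$.

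Finally, to convert this loyalty bound into a bound in terms of $\widetilde{q}^{\,*}(\cG)$, I would note that $\widetilde{q}^{\,*}(\cG) \ge \omega \loy(\cA)$: assigning all vertices to a single part at every timestep gives modularity $0$ at each snapshot but full loyalty $n(T-1)$, so $\widetilde{q}^{\,*}(\cG) \ge \omega n(T-1) \ge \omega \loy(\cA)$ since $\loy(\cA) \le n(T-1)$. Combining, $\omega \sum_j \loy(\cA,t_j) \le \tfrac{1}{d}\,\omega \loy(\cA) \le \tfrac{1}{d}\widetilde{q}^{\,*}(\cG)$, and substituting into the displayed inequality gives $\sum_i \widetilde{q}^{\,*}(\cG_{[t_i+1,t_{i+1}]}) \ge (1-\tfrac{1}{d})\widetilde{q}^{\,*}(\cG)$, as required.

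The main obstacle I anticipate is purely in the bookkeeping of which loyalty positions survive the cutting, namely verifying that exactly the interior boundary positions $t_1,\dots,t_{\ell-1}$ are dropped and that the two end intervals still have length at most $d$, together with confirming the inequality $\widetilde{q}^{\,*}(\cG) \ge \omega \loy(\cA)$ that lets us replace the loyalty of the optimal partition by the global optimum. The averaging step itself is routine and is the standard device for finding a cheap cut.
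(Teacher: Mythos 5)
Your proof is correct, and it follows the same broad strategy as the paper's — fix an optimal partition $\cA$, cut the timeline at boundaries where little loyalty is lost, and find those boundaries by averaging — but it differs in two concrete ways, both of which improve on the paper's write-up. First, where the paper selects within each window $[(j-1)d+1,\,jd]$ the timestep minimising $\loy(\cA,\cdot)$ (so consecutive cuts can be up to $2d$ apart, which is why the lemma is stated with the bound $2d$), you average over the $d$ residue classes modulo $d$ and cut along the cheapest class; this also loses at most $\frac{1}{d}\loy(\cA)$ but yields intervals of length at most $d$, strictly stronger than required. Second, and more substantively, your final step is more careful than the paper's. Writing $S$ for the total snapshot-modularity term of the optimal partition, the paper concludes with the inequality $S + \left(1-\frac{1}{d}\right)\omega\loy(\cA) \geq \left(1-\frac{1}{d}\right)\left(S + \omega\loy(\cA)\right)$, which is valid only if $S \geq 0$; the paper never justifies this, and it is not true for arbitrary partitions (snapshot modularity can be negative), only for optimal ones — precisely because $S + \omega\loy(\cA) = \widetilde{q}^{\,*}(\cG) \geq \omega n(T-1) \geq \omega\loy(\cA)$, by comparison with the all-in-one-part partition. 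You instead make exactly this comparison explicit, proving $\widetilde{q}^{\,*}(\cG) \geq \omega n(T-1) \geq \omega\loy(\cA)$ and bounding the total loyalty loss directly by $\frac{1}{d}\widetilde{q}^{\,*}(\cG)$, so your argument supplies the observation that the paper's last inequality silently relies on.
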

\begin{proof}
    Let $\cA$ be a partition of $\cG$ such that $\widetilde{q}^{\,*}(\cG) = \widetilde{q}(\cG,\cA)$.  For each $1 \leq j \leq \lfloor \frac{T}{d} \rfloor$, fix an integer $t_j \in [(j-1)d + 1, j d]$ such that 
        $$\loy(\cA,t_j) = \min_{t \in [(j-1)d + 1, j d]} \loy(\cA,t).$$
    Note that this choice of $t_j$ ensures that
    \begin{equation}\label{lem3:min_bounded_by_av}
        \loy(\cA,t_j) \leq \frac{1}{d} \sum_{t=(j-1)d+1}^{jd} \loy(\cA,t).
    \end{equation}
    For convenience we further define $t_0 = 0$ and $t_{\lfloor \frac{T}{d} \rfloor + 1} = T$.  Note that, by construction, we have $t_{j+1} - t_j \leq 2d$ for each $0 \leq j \leq \lfloor \frac{T}{d} \rfloor + 1$.  It remains to show that 
        $$\sum_{j=0}^{\lfloor \frac{T}{d} \rfloor} \widetilde{q}^{\,*}(\cG_j) 
 \geq \left(1 - \frac{1}{d}\right) \widetilde{q}^{\,*}(\cG)$$ 
    where $\cG_j = \cG_{[t_j+1,t_{j+1}]}$ and $\cA_j$ is the restriction of $\cA$ to $\cG_j$ for each $0 \leq j \leq \lfloor \frac{T}{d} \rfloor$.

    To see this, observe that
    \begin{align*}
        \sum_{j=0}^{\lfloor \frac{T}{d} \rfloor} \widetilde{q}^{\,*}(\cG_j) &\geq \sum_{j=0}^{\lfloor \frac{T}{d} \rfloor} \widetilde{q}(\cG_j,\cA_j) \\
            &= \sum_{j=0}^{\lfloor \frac{T}{d} \rfloor} \left( \sum_{t=t_j+1}^{t_{j+1}} \sum_{A \in \cA_t} \left( 2e_{G_t}(A) - \frac{\vol_{G_t}(A)^2}{2m_t}\right) + \omega \loy(\cA_j)\right) \\
            &= \sum_{t=1}^T \sum_{A \in \cA_t} \left( 2e_{G_t}(A) - \frac{\vol_{G_t}(A)^2}{2m_t}\right) + \omega \loy(\cA) - \omega \sum_{j=1}^{\lfloor \frac{T}{d} \rfloor} \loy(\cA,t_j) \\
            &\geq \sum_{t=1}^T \sum_{A \in \cA_t} \left( 2e_{G_t}(A) - \frac{\vol_{G_t}(A)^2}{2m_t}\right) + \omega \loy(\cA) - \omega \sum_{j=1}^{\lfloor \frac{T}{d} \rfloor} \frac{1}{d} \sum_{t=(j-1)d+1}^{jd} \loy(\cA,t) 
    \end{align*}
    where the last inequality followed by~\eqref{lem3:min_bounded_by_av}. Now notice that the final part of the expression is 
    simply the sum of $\loy(\cA, t)$ over $t=1, \ldots, \lfloor T/d\rfloor$ and so is upper bounded by $\frac{\omega}{d}\loy(\cA)$ and thus,
    \begin{align*}
        \sum_{j=0}^{\lfloor \frac{T}{d} \rfloor} \widetilde{q}^{\,*}(\cG_j) 
            &\geq \sum_{t=1}^T \sum_{A \in \cA_t} \left( 2e_{G_t}(A) - \frac{\vol_{G_t}(A)^2}{2m_t}\right) + \omega \loy(\cA) - \frac{\omega}{d} \loy(\cA) \\
            &= \sum_{t=1}^T \sum_{A \in \cA_t} \left( 2e_{G_t}(A) - \frac{\vol_{G_t}(A)^2}{2m_t}\right) + \left(1-\frac{1}{d}\right)\omega \loy(\cA)\\
            &\geq \left(1 - \frac{1}{d}\right) \widetilde{q}(\cG,\cA)
            = \left(1 - \frac{1}{d}\right) \widetilde{q}^{\,*}(\cG),
    \end{align*}
    as required. \qed
\end{proof}

\section{Tree-decomposition based algorithm}\label{sec:algorithm}

In this section we describe our main algorithm, and prove that it efficiently computes an approximation to temporal modularity when the underlying graph has small treewidth.

\begin{theorem}\label{thm:tw-approx}
    Let $\mathcal{G}$ be a temporal graph with $n$ vertices and lifetime $T$, and suppose that the underlying graph of $\mathcal{G}$ has treewidth at most $\w$. Let $c$ and $d$ be positive integers. Then we can compute a $\left(1-(\frac{1}{c}+\frac{2}{d})\right)$-approximation to $q^*_\omega(\mathcal{G})$ in time $T^2n^{\mathcal{O}(cd)}c^{\mathcal{O}(\w d)}d^{\mathcal{O}(c)}$. 
\end{theorem}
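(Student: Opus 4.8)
The plan is to compose three reductions---bounding the number of parts, bounding the length of the time windows, and a dynamic program over a nice tree decomposition---and to check that the compounded multiplicative error is at most $\frac1c+\frac2d$. Throughout I would work with the non-normalised modularity $\widetilde{q}$, which decomposes over time windows, and only divide by the partition-independent normalisation $2\mu(\cG)$ at the very end to report the normalised answer.

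First I would pass to partitions with at most $c$ parts. Applying \cref{lem:k_part_approx_bound} with $k=c$, discarding the nonnegative additive term, and multiplying through by $2\mu(\cG)$ gives $\widetilde{q}^{\,*}_{c,\omega}(\cG)\geq(1-\tfrac1c)\widetilde{q}^{\,*}_\omega(\cG)$, so it suffices to approximate $\widetilde{q}^{\,*}_{c,\omega}$. Next I would reduce to short windows. The key observation is that \cref{lem:interval-approx-ub} and \cref{lem:interval-approx-lb} both hold verbatim with $\widetilde{q}^{\,*}$ replaced by the $c$-part quantity $\widetilde{q}^{\,*}_c$: gluing partitions that each use labels in $\{1,\dots,c\}$ yields a partition using labels in $\{1,\dots,c\}$, and restricting a $c$-part partition yields $c$-part partitions, so neither proof increases the number of parts. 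Hence, writing $W$ for the maximum over all decompositions $0=t_0<t_1<\dots<t_\ell=T$ with $t_i-t_{i-1}\leq 2d$ of $\sum_i\widetilde{q}^{\,*}_c(\cG_{[t_{i-1}+1,t_i]})$, the two lemmas sandwich $W$ as $(1-\tfrac1d)\widetilde{q}^{\,*}_c(\cG)\leq W\leq\widetilde{q}^{\,*}_c(\cG)$.

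It then remains to evaluate $W$. Since we do not know the good decomposition promised by \cref{lem:interval-approx-lb}, I would precompute $\widetilde{q}^{\,*}_c(\cG_{[a,b]})$ for every short window $[a,b]$ (there are $O(Td)$ of them) and then recover $W$ by a left-to-right timeline dynamic program $f(s)=\max_{s-2d\leq s'<s} f(s')+\widetilde{q}^{\,*}_c(\cG_{[s'+1,s]})$ with $f(0)=0$, costing $O(Td)$ once the window values are known. For a single short window I would run a dynamic program over a nice tree decomposition of the underlying graph. The state at a bag records, for each of the $\leq\w+1$ bag vertices and each of the $\leq 2d$ window timesteps, which of the $c$ parts it occupies---this is $c^{O(\w d)}$ states---together with the partial edge- and loyalty-contributions accumulated so far. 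Because the state tracks all window timesteps simultaneously, the loyalty term $\omega\,\delta_\cA((v,t),(v,t+1))$, which couples consecutive snapshots, can be charged locally when a vertex is forgotten; this is exactly where shortness of the windows is essential and where the temporal problem departs from the static one. The nonlocal penalties $\vol_{G_t}(A)^2/(2m_t)$ are linearised in the standard way by guessing the volume of each of the $c$ parts in each of the $\leq 2d$ snapshots and verifying consistency at the root; with exact integer guesses this already yields the $n^{O(cd)}$ factor, and rounding the guesses onto a coarser grid yields the $d^{O(c)}$ factor at the cost of at most a further $(1-\tfrac1d)$ factor in the approximation.

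Assembling the bounds, $W/2\mu(\cG)\leq q^*_\omega(\cG)$, while $W/2\mu(\cG)\geq(1-\tfrac1c)(1-\tfrac1d)^2\,q^*_\omega(\cG)\geq\bigl(1-(\tfrac1c+\tfrac2d)\bigr)q^*_\omega(\cG)$ (the inequality $(1-\tfrac1d)^2\geq 1-\tfrac2d$ absorbing both the windowing loss and the rounding loss; exact guessing would already give the weaker-sufficing $1-\tfrac1c-\tfrac1d$). Multiplying the number of windows, the per-window tree-decomposition DP over $O(\w n)$ bags with $c^{O(\w d)}$ states, and the $n^{O(cd)}d^{O(c)}$ volume guesses, and adding the $O(Td)$ timeline DP, gives the stated running time. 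I expect the main obstacle to be the design and correctness proof of the per-window dynamic program: one must simultaneously account \emph{locally} for intra-snapshot edges and for the cross-snapshot loyalty term within bounded state---forcing the window length into the exponent, hence the need for short windows---and \emph{globally} linearise the quadratic volume penalties by per-snapshot volume guessing, controlling both the number of guesses and the rounding error so that the approximation guarantee survives.
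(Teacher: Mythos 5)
Your proposal follows essentially the same route as the paper: reduce to at most $c$ parts via Lemma~\ref{lem:k_part_approx_bound}, reduce to short time windows via Lemmas~\ref{lem:interval-approx-ub} and~\ref{lem:interval-approx-lb}, solve each window by a dynamic program over a nice tree decomposition whose states record each bag vertex's part at every timestep of the window together with accumulated edge, volume and loyalty information, and stitch the windows together with a timeline dynamic program. Your minor reorganisations are all sound: applying the $c$-part reduction globally (after observing that Lemmas~\ref{lem:interval-approx-ub} and~\ref{lem:interval-approx-lb} survive the restriction to $c$ parts, which is correct, since their proofs only restrict and glue partitions) rather than per window; using windows of length $2d$ with a $\left(1-\frac{1}{d}\right)$ loss rather than length $d$ with a $\left(1-\frac{2}{d}\right)$ loss; and a one-dimensional rather than the paper's two-dimensional interval DP, which is if anything cleaner.

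The one genuine problem is the grid-rounding step. First, its error analysis is only asserted: modularity is a \emph{difference} of sums (edge and loyalty contributions minus volume penalties), so a multiplicative perturbation of the penalty terms does not translate into a multiplicative $\left(1-\frac{1}{d}\right)$ factor on the objective without an argument relating the absolute rounding error to $\widetilde{q}^{\,*}$; no such argument is given, and it is not clear one exists. Second, the dimensional analysis is off: rounding each of the $c\cdot O(d)$ per-snapshot volumes onto a grid of $g$ points gives $g^{O(cd)}$ profiles, not $d^{O(c)}$, so rounding cannot ``produce'' the $d^{\mathcal{O}(c)}$ factor. In the paper that factor is merely an artifact of counting exact states (the forgotten-edge counts $\alpha$ range over $\{0,\dots,mT\}$, contributing $(mT+1)^c$); it is not a separate algorithmic ingredient, and your attempt to reverse-engineer it led to this flawed step. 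Fortunately the step is also unnecessary: your exact-guessing variant computes each window optimum exactly, gives the better factor $\left(1-\frac{1}{c}\right)\left(1-\frac{1}{d}\right) \geq 1-\left(\frac{1}{c}+\frac{2}{d}\right)$, and its $n^{\mathcal{O}(cd)}c^{\mathcal{O}(\w d)}$ cost per window, multiplied by the $O(Td)$ windows and the timeline DP, fits inside the stated bound $T^2n^{\mathcal{O}(cd)}c^{\mathcal{O}(\w d)}d^{\mathcal{O}(c)}$. Deleting the rounding step (and keeping everything else) leaves a correct proof.
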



As a subroutine, our approximation algorithm will use an algorithm which computes exactly the maximum temporal modularity achievable with a partition into $c$ parts when both the underlying treewidth and lifetime are small.

\begin{theorem}\label{thm:c-modularity}
Let $\cG$ be a temporal graph with $n$ vertices, lifetime $T$ and underlying treewidth at most $\w$. The non-normalised temporal $c$-modularity of $\cG$, $\widetilde{q}^{\,*}_{c,\omega}(\mathcal{G})$, can be calculated in time $n^{\mathcal{O}(cT)}c^{\mathcal{O}(\w T)}T^{\mathcal{O}(c)}$.
\end{theorem}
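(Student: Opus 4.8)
The plan is to compute $\widetilde{q}^{\,*}_{c,\omega}(\cG)$ by a dynamic program over a nice tree decomposition of the underlying graph $G$, processing all $T$ timesteps simultaneously. First I would compute a nice tree decomposition $(H,\mathcal{D})$ of $G$ of width at most $\w$ with $\mathcal{O}(\w n)$ nodes, using the standard node types (introduce-vertex, introduce-edge, forget, and join), so that each edge is introduced exactly once and, since the root bag is empty, each vertex is forgotten exactly once. The guiding observation is the same one exploited in the static treewidth algorithm for modularity: working from \eqref{eqn:sum_form_def_tempmod}, the term $2e_{G_t}(A)$ decomposes edge-by-edge and is therefore local, and the loyalty contribution of a vertex depends only on its own label sequence and so is local to that vertex; by contrast the penalty $\vol_{G_t}(A)^2/(2m_t)$ is global and quadratic, so the part-volumes must be carried explicitly in the state.

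Concretely, for a node $b$ with bag $X_b$ I would index the DP cells by a pair: (i) a colouring $\sigma\colon X_b \times [T] \to [c]$ recording, for each bag vertex and each time, which of the $c$ labelled parts it occupies, and (ii) a volume vector recording, for each part $p \in [c]$ and each time $t$, the total $G_t$-degree of the \emph{already-forgotten} vertices below $b$ that are assigned to $p$ at time $t$. Each entry lies in $\{0,1,\dots,2m_t\}$, so there are $n^{\mathcal{O}(cT)}$ volume vectors and $c^{\mathcal{O}(\w T)}$ colourings. The value stored in a cell is the maximum, over partitions of the processed subgraph consistent with $\sigma$ and the recorded volumes, of the partial score given by the already-introduced edge terms $\sum_{t\in\lambda(uv)}2\ind{\sigma(u,t)=\sigma(v,t)}$ together with the loyalty contributions $\omega\sum_{t=1}^{T-1}\ind{\sigma(v,t)=\sigma(v,t+1)}$ of the forgotten vertices. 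Crucially, the penalty terms are deliberately \emph{excluded} from this partial score and only reintroduced at the root.

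The transitions are then routine. At an introduce-edge node for $uv$ I add the edge terms to each cell; at a forget node for $v$ I ``finalise'' $v$ by adding its loyalty contribution and folding its degrees $d_{v,t}$ into the volume vector, which charges each vertex's volume and loyalty exactly once since every vertex is forgotten exactly once; and at a join node I combine cells with matching $\sigma$ by summing the two partial scores and summing the two volume vectors, which cannot double-count because the two child subtrees share only the bag vertices and those have not yet been charged. At the empty root bag the volume vector records the true part-volumes $\vol_{G_t}(p)$, so $\widetilde{q}^{\,*}_{c,\omega}(\cG)$ is obtained as the maximum over root cells of the stored partial score minus $\sum_{p\in[c]}\sum_{t=1}^{T}\vol_{G_t}(p)^2/(2m_t)$; since this penalty depends only on the volume vector, maximising the partial score for each fixed volume vector and subtracting afterwards correctly optimises the full objective. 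Multiplying the per-cell work, which is dominated by the $n^{\mathcal{O}(cT)}$ pairings at join nodes, by the $\mathcal{O}(\w n)$ nodes yields a running time of the stated form.

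I expect the main obstacle to be the correctness bookkeeping, rather than the transitions themselves. The key is to set up and maintain a precise invariant — that at each node exactly the edge terms of introduced edges and the loyalty and volume of already-forgotten vertices have been accounted for, with the bag vertices still ``pending'' — and then to argue that decoupling the quadratic penalty from the DP value and tracking partial volumes additively genuinely recovers the global maximum. Pinning down the volume ranges and hence the exact state count, so as to land on the claimed $n^{\mathcal{O}(cT)}c^{\mathcal{O}(\w T)}T^{\mathcal{O}(c)}$ bound (in particular accounting correctly for the factors involving $T$), is the remaining point that requires care.
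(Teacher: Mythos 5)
Your proposal is correct and follows the same high-level strategy as the paper: a dynamic program over a nice tree decomposition whose states record the part label of each bag vertex at each time together with the per-part, per-time volume of already-forgotten vertices, with the quadratic volume penalty deferred to the empty root bag. The differences are in the DP design, and they are genuine though modest. First, the paper does not use introduce-edge nodes; within-part edges incident to a vertex $v$ are instead counted at the moment $v$ is forgotten. Second, and more substantively, the paper makes the forgotten time-edge count $\alpha$ and the forgotten-vertex loyalty count $\gamma$ additional \emph{components of the state} (alongside the colouring $\pi$ and the volume table $\beta$), and runs an ``enumerate all achievable states'' DP: a tuple $(\pi,\alpha,\beta,\gamma)$ is declared valid iff some partition of the processed vertices realises it, and only at the root is the objective
\[
\sum_{p\in[c]}\Bigl(2\alpha(p)-\sum_{t=1}^{T}\frac{\beta(p,t)^2}{2m_t}\Bigr)+\omega\gamma
\]
maximised over valid states. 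Your formulation instead treats the edge and loyalty contributions as the maximised DP \emph{value} attached to each state $(\sigma,\text{volumes})$, which is the more standard design and strictly leaner: it removes the $(mT+1)^{c}$ and $n(T-1)+1$ factors from the state count --- precisely the source of the paper's $T^{\mathcal{O}(c)}$ term --- so your running time is $n^{\mathcal{O}(cT)}c^{\mathcal{O}(\w T)}$ times polynomial overhead, which sits comfortably inside the claimed bound. The one point your approach must argue that the paper's sidesteps is exactly the one you flag: deferring the penalty to the root is sound because the penalty is a function of the state (the volume vector) alone, not of which consistent partition attains the maximum value, so maximising edge-plus-loyalty per fixed volume vector and subtracting afterwards recovers the true optimum. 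The paper avoids needing this observation by carrying everything in the state, at the cost of a larger state space and the somewhat heavier validity lemmas for leaf, introduce, forget and join nodes.
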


Theorem \ref{thm:c-modularity} is proved using fairly standard dynamic programming techniques over a \emph{nice tree decomposition}, which is defined as follows.

\begin{definition}\cite{cygan2015Algorithms}
We call a tree decomposition $(H,\mathcal{D})$ \emph{nice} if all leaves and the root of the tree contain empty bags and all non-leaf nodes are one of three types:
\begin{itemize}
    \item \emph{Introduce node}: a node $b$ with exactly one child $b'$ such that $\mathcal{D}(b)=\mathcal{D}(b')\cup \{v\}$, for some vertex $v \notin \mathcal{D}(b')$.
    \item \emph{Forget node}: a node $b$ with exactly one child $b'$ such that $\mathcal{D}(b)=\mathcal{D}(b')\setminus \{v\}$ for some vertex $v \in \mathcal{D}(b')$.
    \item \emph{Join node}: a node $b$ with exactly two children $b_1$ and $b_2$ such that $\mathcal{D}(b)=\mathcal{D}(b_1)=\mathcal{D}(b_2)$.
\end{itemize}
\end{definition}

We define $V_b$ to be the set of all vertices in $\mathcal{D}(b)$ and all bags below the node $b$.
We call the set $V_b\setminus\mathcal{D}(b)$ the set of vertices that have been \emph{forgotten} at $b$. By the third property from Definition \ref{def:tree_decomp} (i.e.~that the subtree of the tree decomposition induced by the nodes containing any given vertex is connected) we know that these vertices occur strictly below $\mathcal{D}(b)$ in the tree.

We now describe the information we will store at each node of the nice tree decomposition in our algorithm.
For any node $b \in V(H)$ a \emph{state} of $b$ consists of the following:
\begin{enumerate}
    \item A partition function $\pi : \mathcal{D}(b) \times [T] \rightarrow [c]$.
    \item A function $\alpha : [c] \rightarrow [mT]$, a count of all time-edges within each part that have at least one vertex forgotten. 
    \item A function $\beta : [c]\times[T] \rightarrow [2m]$, a count of the degree of all forgotten vertices within each time and part.
    \item A number $\gamma$, a count of the total loyalty edges from forgotten vertices.
\end{enumerate}

\begin{lemma}\label{lem:state_count}
Let $\cG$ be a temporal graph with $n$ vertices, $m$ edges, lifetime $T$ and underlying treewidth at most $\w$. 
Then there are $\mathcal{O}(2^{cT}c^{(\w +1)T}m^{c(T+1)}T^{c+1}n)$ states describing partitions into at most $c$ parts at any node in a tree decomposition of $\cG$.
\end{lemma}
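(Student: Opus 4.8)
The plan is to bound the total number of states at a node $b$ by counting the number of choices for each of the four components of a state \emph{independently} and then taking the product; since any infeasible combination can only reduce the true count, the product is a valid upper bound. The key structural fact I would invoke at the outset is that, because the decomposition has width $\w$, every bag satisfies $\abs{\mathcal{D}(b)} \leq \w + 1$, so all counts involving the bag are controlled by $\w + 1$.

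First I would count the partition functions $\pi : \mathcal{D}(b) \times [T] \to [c]$. The domain has size at most $(\w+1)T$ and the codomain has size $c$, so there are at most $c^{(\w+1)T}$ such functions. (This over-counts, since an unlabelled partition into at most $c$ parts may correspond to several labelled functions $\pi$, but over-counting is harmless for an upper bound.) Next I would treat $\alpha : [c] \to [mT]$ and $\beta : [c]\times[T] \to [2m]$ identically: the former has at most $(mT)^c = m^c T^c$ possibilities and the latter has at most $(2m)^{cT} = 2^{cT} m^{cT}$ possibilities. Finally, I would bound $\gamma$, the count of loyalty edges among forgotten vertices, by recalling that the total loyalty contribution $\loy(\cA)$ of any partition is at most $n(T-1)$, since each of the $n$ vertices can be loyal across each of the $T-1$ consecutive pairs of timesteps; hence $\gamma$ ranges over $\{0,1,\dots,n(T-1)\}$, giving $\mathcal{O}(nT)$ choices.

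Multiplying the four independent counts yields
$$
    c^{(\w+1)T} \cdot m^c T^c \cdot 2^{cT} m^{cT} \cdot \mathcal{O}(nT)
    = \mathcal{O}\!\left(2^{cT}\, c^{(\w+1)T}\, m^{c(T+1)}\, T^{c+1}\, n\right),
$$
using $m^c \cdot m^{cT} = m^{c(T+1)}$ and $T^c \cdot T = T^{c+1}$, which is exactly the claimed bound. I do not expect any genuine obstacle: this is a direct product of elementary counts. The only step requiring a little care is the bound on $\gamma$, where one must correctly identify the maximum possible loyalty as $n(T-1)$ so that its range supplies precisely the factors $n$ and the extra $T$ (the latter combining with the $T^c$ from $\alpha$ to give $T^{c+1}$).
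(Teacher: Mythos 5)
Your proposal is correct and follows essentially the same argument as the paper: bound the number of choices for each of the four components $\pi$, $\alpha$, $\beta$, $\gamma$ independently (using $\abs{\mathcal{D}(b)} \leq \w+1$ for the partition functions and $\loy \leq n(T-1)$ for $\gamma$) and multiply. The only difference is cosmetic --- the paper counts the ranges of $\alpha$ and $\beta$ as integers from $0$ to $mT$ and $0$ to $2m$ respectively, giving $(mT+1)^c$ and $(2m+1)^{cT}$ rather than your $(mT)^c$ and $(2m)^{cT}$, which changes nothing asymptotically.
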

\begin{proof}
    We consider all possible partition functions; for the at most $\w +1$ vertices in the bag and the $T$ times in the lifetime, there are $c^{(\w +1)T}$ ways a partition function could place them into $c$ parts. Hence, there are $c^{(\w +1)T}$ possible such functions.
    Similarly, for each part $p \in [c]$ the value $\alpha(p)$ can take is any integer from $0$ to $mT$, which gives $(mT+1)^c$ possible functions $\alpha$.
    There are $cT$ combinations of times $t \leq T$ and parts $p\in [c]$ and for each such pair $\beta(p,t)$ is an integer from $0$ to $2m$. This gives $(2m+1)^{cT}$ possible functions $\beta$.
    Finally, the loyalty score can take any integer value from $0$ where no vertex is in the same part at two consecutive times to $n(T-1)$ where all vertices are loyal at all times. This gives $n(T-1)+1$ possible values of $\gamma$.
    Therefore, there are $\mathcal{O}(2^{cT}c^{(\w +1)T}m^{c(T+1)}T^{c+1}n)$ combinations of possible functions that the state could have. \qed
\end{proof}

For a node $b \in V(H)$ a state $(\pi,\alpha,\beta,\gamma)$ of $b$ is \emph{valid} if there exists a partition function $\pi^*:V_b\times T \rightarrow [c]$ such that:
\begin{enumerate}
\item $\pi^*| _{\mathcal{D}(b)\times T}=\pi$,
    \item For each $p \in [c]$,  $\alpha(p)$ is the number of all time-edges in part $p$ in $\pi^*$ that have at least one vertex in $V_b \setminus \mathcal{D}(b)$ or equivalently $\alpha(p)=|\{(uv,t):t\in[T],uv\in E_t,\pi^*(v,t)=\pi^*(u,t)=p \text{ and } \{u,v\}\cap V_b \setminus \mathcal{D}(b)\neq \emptyset \}|$, 
    \item For each time $t$ and $p \in [c]$, $ \beta(p,t)$ is the sum of degrees of all vertices in $V_b \setminus \mathcal{D}(b)$ in part $p$ at time $t$ in $\pi^*$ or equivalently $\beta(p,t)=\sum_{\substack{u\in V_b \setminus \mathcal{D}(b)\\\pi^*(u,t)=p}} d_{u,t}$, and
    \item $\gamma$ is the loyalty contribution from all vertices in $V_b \setminus \mathcal{D}(b)$ in $\pi^*$, equivalently  $\gamma= \sum_{v \in V_b \setminus \mathcal{D}(b)} \sum_{t=1}^{T-1} \delta_{\pi^*}((v,t),(v,t+1))$.
\end{enumerate}
If these conditions hold we say that the partition function $\pi^*$ \emph{supports} the valid state $(\pi,\alpha,\beta,\gamma)$.  

We now describe how to determine the set of valid states for each type of node, given the sets of valid states for all of the node's children.

\begin{lemma} \label{lem:tw leaf}
    A state $(\pi,\alpha,\beta,\gamma)$ at a leaf node is valid if and only if:
    \begin{itemize}
        \item $\pi$ is the empty partition function,
        \item $\alpha(p)=0$ for all $p\in[c]$,
        \item $\beta(p,t)=0$ for all $p \in [c],t\in[T]$, and
        \item $\gamma=0$.
    \end{itemize}
\end{lemma}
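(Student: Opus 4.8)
The plan is to prove both directions of the equivalence by directly unwinding the definition of a valid state, relying on the single structural fact that a leaf of a nice tree decomposition has an empty bag and no descendants. First I would record this fact explicitly: since $b$ is a leaf, the definition of a nice tree decomposition gives $\mathcal{D}(b) = \emptyset$, and since there are no nodes below $b$ the set $V_b$ of vertices appearing in $\mathcal{D}(b)$ or any bag beneath it is also empty. In particular $V_b \setminus \mathcal{D}(b) = \emptyset$, so there are no forgotten vertices at a leaf. Everything else then follows from this observation together with the convention that an empty sum or empty count equals zero.

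For the forward direction, I would suppose $(\pi,\alpha,\beta,\gamma)$ is valid and let $\pi^*: V_b \times [T] \to [c]$ be a supporting partition function. Its domain $V_b \times [T]$ is empty, so $\pi^*$ is the unique empty function, and its restriction to $\mathcal{D}(b) \times [T] = \emptyset$ forces $\pi$ to be the empty partition function. Each of the three remaining validity conditions is a count or a sum indexed over objects incident to a vertex of $V_b \setminus \mathcal{D}(b)$; since this set is empty there are no such time-edges, no such vertex degrees, and no such loyalty contributions, so $\alpha(p) = 0$ for every $p$, $\beta(p,t) = 0$ for every $p$ and $t$, and $\gamma = 0$.

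For the converse I would exhibit the empty partition function as a witness for the all-zero state: with $V_b = \emptyset$ it trivially satisfies condition (1), and conditions (2)--(4) each reduce to an empty sum equal to the required zero. I expect no genuine obstacle here, as this lemma is simply the base case of the dynamic program; the only point worth stating carefully is that the three auxiliary quantities $\alpha$, $\beta$, and $\gamma$ are all defined over index sets that vanish once $V_b \setminus \mathcal{D}(b) = \emptyset$, so each necessarily evaluates to zero.
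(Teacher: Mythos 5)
Your proposal is correct and follows essentially the same argument as the paper: both exploit that a leaf has an empty bag and no descendants, hence no forgotten vertices, so the supporting partition function must be empty and all counts vanish in the forward direction, while the empty partition function witnesses validity of the all-zero state in the converse. Your write-up is if anything slightly more explicit about the empty-index-set conventions, but there is no substantive difference.
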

\begin{proof}
From our definition of a nice tree decomposition we know that all leaf nodes have empty bags, hence the only possible partition function is the empty partition function.
If $(\pi,\alpha,\beta,\gamma)$ is a valid state at the empty leaf node $b$ then there is some partition function $\pi^*$ on the forgotten vertices that supports it. As $b$ is a leaf node there are no nodes that occur below $b$ and therefore no vertices forgotten at $b$. Hence $\pi^*$ must be the empty partition function. As no vertices are forgotten the count of degrees, edges and loyalty edges forgotten must all be zero, that is $\alpha(p)=0$, $\beta(p,t)=0$ and $\gamma=0$ for all $p\in[c]$ and $t\in[T]$.
Conversely, if $(\pi,\alpha,\beta,\gamma)$ is as given in the lemma statement then the empty partition $\pi^*$ supports the state and therefore it is valid.
\qed 
\end{proof}

\begin{lemma} \label{lem:tw intro}
    Let $b$ be an introduce node with child $b'$ such that $\mathcal{D}(b)\setminus\mathcal{D}(b')=\{v\}$. Then $(\pi,\alpha,\beta,\gamma)$ is a valid state for $b$ if and only if there exists a valid state  $(\pi',\alpha',\beta',\gamma')$ for $b'$ such that:
    \begin{enumerate}
    \item $\pi| _{\mathcal{D}(b')\times [T]}=\pi'$,
        \item $\alpha(p)=\alpha'(p)$ for all $p\in [c]$,
        \item $ \beta(p,t)=\beta'(p,t)$ for all $p \in [c], t\in [T]$, and
        \item $\gamma=\gamma'$.
    \end{enumerate} 
\end{lemma}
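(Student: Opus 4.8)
The plan is to exploit the fact that introducing the vertex $v$ changes neither the set of forgotten vertices nor their interaction with the rest of the graph, so that all three accounting quantities $\alpha,\beta,\gamma$ are preserved and the partition function merely gains the extra coordinates $\{v\}\times[T]$.

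First I would record two structural observations. The forgotten sets at $b$ and at its child $b'$ coincide, i.e. $V_b\setminus\mathcal{D}(b)=V_{b'}\setminus\mathcal{D}(b')$: since $b'$ is the unique child of $b$ we have $V_b=V_{b'}\cup\{v\}$, and the connectivity condition (the third property of \cref{def:tree_decomp}) forces $v\notin V_{b'}$, because a newly introduced vertex cannot occur in any bag strictly below $b$; subtracting $\mathcal{D}(b)=\mathcal{D}(b')\cup\{v\}$ then yields the claim. The second, crucial, observation is that no forgotten vertex $u\in V_b\setminus\mathcal{D}(b)$ is adjacent to $v$: any edge $uv$ would require a common bag, but $v$ lies only in bags at or above $b$ while $u$ lies only in bags strictly below $b$ and $u\notin\mathcal{D}(b)$, so no such bag exists.

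For the forward direction, I would take a partition function $\pi^*\colon V_b\times[T]\to[c]$ supporting the state at $b$ and set $\rho=\pi^*|_{V_{b'}\times[T]}$. Because the two forgotten sets coincide and $\rho$ agrees with $\pi^*$ on them, the counts of forgotten time-edges, forgotten degrees, and forgotten loyalty edges defining $\alpha,\beta,\gamma$ are unchanged; here the adjacency observation guarantees that no forgotten time-edge is incident to $v$, so discarding $v$ cannot alter the $\alpha$-count. Thus $\rho$ supports $(\pi|_{\mathcal{D}(b')\times[T]},\alpha,\beta,\gamma)$ at $b'$, which is the required valid child state. For the converse, given a function $\sigma$ supporting $(\pi',\alpha',\beta',\gamma')$ at $b'$, I would extend it to $\pi^*\colon V_b\times[T]\to[c]$ by setting $\pi^*(v,t)=\pi(v,t)$ for all $t$ and $\pi^*=\sigma$ elsewhere; the same observations, together with $\alpha=\alpha'$, $\beta=\beta'$, $\gamma=\gamma'$ and $\pi|_{\mathcal{D}(b')\times[T]}=\pi'$, show that $\pi^*$ supports $(\pi,\alpha,\beta,\gamma)$ at $b$.

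The main obstacle is the bookkeeping around $\alpha$: one must be sure that introducing $v$ neither creates nor destroys a forgotten time-edge, and this is exactly where the adjacency observation (and hence the connectivity axiom of tree decompositions) does the real work. The $\beta$- and $\gamma$-accounting is then immediate, since both depend only on the forgotten vertices and their part assignments, which are identical under $\pi^*$ and its restriction.
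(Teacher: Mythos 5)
Your proof is correct and follows essentially the same route as the paper's: restrict the supporting partition function to $V_{b'}\times[T]$ for the forward direction, and extend it by $\pi^*(v,t)=\pi(v,t)$ for the converse, with everything resting on the fact that the forgotten sets at $b$ and $b'$ coincide. Your explicit observation that $v$ has no neighbour among the forgotten vertices (via the connectivity axiom) is a nice touch that makes the $\alpha$-bookkeeping fully rigorous, whereas the paper's own proof silently relies on this fact when asserting $\alpha(p)=\alpha'(p)$.
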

\begin{proof}
 Suppose $(\pi,\alpha,\beta,\gamma)$ is a valid state for $b$, and let $\pi^*$ be a partition function supporting its validity. We will show that the partition function $\pi^{**}:=\pi^*|_{(V_b\times T)\setminus (\{v\}\times T)}$ supports the state $(\pi',\alpha',\beta',\gamma')$.
 Note that as no new vertices have been forgotten $V_b\setminus \mathcal{D}(b)=V_{b'}\setminus \mathcal{D}(b')$, that is the forgotten sets of each node are equal.
 
Let $\pi'$ be the restriction of $\pi^{**}$ onto $\mathcal{D}(b')\times T$, then as $\mathcal{D}(b')\subset \mathcal{D}(b)$ we have that $\pi'$ is the partition function corresponding to $\pi^{**}$ with $\pi| _{\mathcal{D}(b')\times T}=\pi'$ as required.
    
For $p \in [c]$, $\alpha(p)$ is the count of all time-edges in part $p$ in $\pi^*$ that have at least one vertex in $V_b\setminus \mathcal{D}(b)$ and $\alpha'(p)$ is the count of time-edges in part $p$ in $\pi^{**}$ that have at least one vertex in $V_{b'} \setminus \mathcal{D}(b')$. As $V_b\setminus \mathcal{D}(b)=V_{b'}\setminus \mathcal{D}(b')$ and $\pi^{*}$ and $\pi^{**}$ are the same on the forgotten sets we have that $\alpha'$ is the count of time-edges corresponding to $\pi^{**}$  with $\alpha(p)=\alpha'(p).$
    
For $p \in [c]$ and time $t\in [T]$,  $\beta(p,t)$ is the count of the degree of vertices in $V_{b}\setminus \mathcal{D}(b)$ at time $t$ in part $p$ in $\pi^*$. As the forgotten sets are identical this count is the same for vertices in $V_{b'}\setminus \mathcal{D}(b')$ in $\pi^{**}$ which is the definition of $\beta'(p,t)$.
Hence, $\beta'$ is the count of degree of forgotten vertices corresponding to $\pi^{**}$ with $\beta(p,t)=\beta'(p,t)$ for all $p \in [c], t \in [T]$.
    
The value $\gamma$ is the total loyalty edges in $V_{b}\setminus \mathcal{D}(b)$ in $\pi^*$, since the partition functions on the forgotten sets are identical this is the same as the total loyalty edges in $V_{b'}\setminus \mathcal{D}(b')$ in $\pi^{**}$ so $\gamma'$ is the loyalty count corresponding to $\pi^{**}$ with $\gamma=\gamma'$.
Therefore, $(\pi',\alpha',\beta',\gamma')$ is also a valid state and is supported by $\pi^{**}$.

 Suppose $(\pi',\alpha',\beta',\gamma')$ is a valid state for $b'$ such that our conditions hold, and let $\pi'^*$ be a partition function supporting its validity. 
     We will show that there is a partition function $\pi^*$ that supports the state $(\pi,\alpha,\beta,\gamma)$.
    We know from condition $1$ that $\pi| _{\mathcal{D}(b')\times [T]}=\pi'$, so it is possible to construct the partition function $\pi^*$ on $V_b \times [T]$ such that $\pi^*|_{V_{b'}\times[T]}=\pi'^*$ and $\pi^*(v,t)=\pi(v,t)$ for all $v \in \mathcal{D}(b)$. Therefore, $\pi$ is the valid partition function corresponding to $\pi^{*}$.

From condition $2$ we have that for any $p \in [c]$, $\alpha'(p)=\alpha(p)$. 
    From the validity of the child we know that $\alpha'(p)$ is the count of all time-edges in part $p$ in $\pi'^*$ that have at least one vertex in $V_{b'}\setminus \mathcal{D}(b')$, this is equal to the count of time-edges with at least one vertex in $V_{b}\setminus \mathcal{D}(b)$ in $p$ in $\pi^*$ as the partition functions are identical over these sets.
    Therefore, $\alpha(p)$ is the accurate count of time edges corresponding to $\pi^*$.
    
    From the validity of the child we know that for any $p \in [c]$ and $t \in [T]$, $\beta'(p,t)$ is the count of the degree of vertices in $V_{b'}\setminus \mathcal{D}(b')$ at time $t$ in part $p$ in $\pi'^*$. From condition $3$ we have that $\beta'(p,t)=\beta(p,t)$. As the partition functions $\pi'^*$ and $\pi^*$ are identical on the forgotten sets this count is the same for vertices in $V_{b}\setminus \mathcal{D}(b)$ in $\pi^*$.
    Hence, $\beta(p,t)$ is the accurate count of degree corresponding to $\pi^*$ as required.
    
The value $\gamma'$ is the total loyalty edges in $V_{b'}\setminus \mathcal{D}(b')$ in $\pi'^*$ and from condition $4$ we know that $\gamma'=\gamma$. Since the partition functions are identical on the forgotten sets this is the same as the total loyalty edges in $\pi^*$ so $\gamma$ is the accurate count of loyalty corresponding to $\pi^*$.
   Therefore, $(\pi',\alpha',\beta',\gamma')$ being a valid state implies $(\pi,\alpha,\beta,\gamma)$ is a valid state.
\qed
\end{proof}

\newpage

\begin{lemma} \label{lem:tw forget}
    Let $b$ be a forget node with child $b'$ such that $\mathcal{D}(b)=\mathcal{D}(b')\setminus\{v\}$ for some $v \in \mathcal{D}(b')$. Then $(\pi,\alpha,\beta,\gamma)$ is a valid state for $b$ if and only if there exists a valid state $(\pi',\alpha',\beta',\gamma')$ of $b'$ such that:
    \begin{enumerate}
    \item $\pi=\pi'| _{\mathcal{D}(b)\times [T]}$ 
    \item $\alpha(p) =\alpha'(p) +|\{(u,t):uv \in E_t,u\in \mathcal{D}(b), t\in[T] \text{ and } (u,t),(v,t) \in p\}|$ for all $p\in[c]$,
    \item for all $p\in[c]$, $t\in[T]$ 
    $$\beta(p,t)=
    \begin{cases}
        \beta'(p,t) + d_{v,t} & \text{if } \pi'(v,t)=p\\
        \beta'(p,t) & \text{otherwise,}
    \end{cases}$$ and 
    \item $\gamma = \gamma' + |\{t \in [T-1]:\pi'(v,t)=\pi'(v,t+1)\}|$.
    \end{enumerate}
\end{lemma}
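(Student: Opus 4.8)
The plan is to prove both directions of the equivalence by exhibiting, for each valid state on one side, a supporting partition function that witnesses a valid state on the other side, mirroring the structure of the proof of Lemma~\ref{lem:tw intro}. The key structural observation is that, since $b$ is a forget node, no new vertex appears below $b$ that was not already below $b'$, so $V_b = V_{b'}$; combined with $\mathcal{D}(b) = \mathcal{D}(b') \setminus \{v\}$ this gives $V_b \setminus \mathcal{D}(b) = (V_{b'} \setminus \mathcal{D}(b')) \cup \{v\}$. In other words, forgetting $v$ simply moves $v$ from the bag into the forgotten set, leaving all other forgotten vertices unchanged. Consequently the same partition function $\pi^* : V_b \times [T] \to [c]$ can serve as a witness for both nodes, and conditions 1--4 are exactly the bookkeeping updates needed to account for the single newly-forgotten vertex $v$.

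For the forward direction, suppose $(\pi,\alpha,\beta,\gamma)$ is valid at $b$ and let $\pi^*$ support it. Since $V_b = V_{b'}$, I would set $\pi'^* = \pi^*$ and let $\pi' = \pi^*|_{\mathcal{D}(b') \times [T]}$, with $\alpha', \beta', \gamma'$ the counts over $V_{b'} \setminus \mathcal{D}(b')$ determined by $\pi^*$, and then check that $(\pi',\alpha',\beta',\gamma')$ is a valid state for $b'$ satisfying the four conditions. Condition 1 is immediate since $\mathcal{D}(b) \subset \mathcal{D}(b')$ and both $\pi, \pi'$ are restrictions of $\pi^*$. Conditions 3 and 4 follow directly from the observation above: the only difference between the forgotten sets of $b$ and $b'$ is the vertex $v$, so $\beta$ differs from $\beta'$ only by $v$'s degree $d_{v,t}$ in the part $\pi'(v,t)$, and $\gamma$ differs from $\gamma'$ exactly by $v$'s own loyalty contribution $|\{t \in [T-1] : \pi'(v,t) = \pi'(v,t+1)\}|$.

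The substantive step, and the one I expect to require the most care, is condition 2. The quantity $\alpha(p) - \alpha'(p)$ counts precisely those time-edges $(uv,t)$ in part $p$ incident to $v$ whose other endpoint $u$ was not already forgotten at $b'$. Here I would invoke the tree-decomposition axioms: since $v \in \mathcal{D}(b')$ but $v \notin \mathcal{D}(b)$, the connectivity property (Definition~\ref{def:tree_decomp}, property 3) implies $v$ appears in no bag at or above $b$, so every neighbour $u$ of $v$ in $G$ lies in a bag at or below $b'$ and hence in $V_{b'}$. Therefore each such $u$ is either already forgotten, in which case the edge $uv$ was already counted in $\alpha'$, or lies in $\mathcal{D}(b') \setminus \{v\} = \mathcal{D}(b)$. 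This shows the newly counted time-edges are exactly $\{(u,t) : uv \in E_t,\ u \in \mathcal{D}(b),\ \pi^*(u,t) = \pi^*(v,t) = p\}$, establishing condition 2.

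The backward direction is then obtained by running the same correspondence in reverse. Given a valid state $(\pi',\alpha',\beta',\gamma')$ at $b'$ with witnessing function $\pi'^*$, I take $\pi^* = \pi'^*$ (legitimate since $V_b = V_{b'}$) and verify, using the same edge-accounting argument and the fact that the forgotten set at $b$ is obtained from that at $b'$ by adding $v$, that the state $(\pi,\alpha,\beta,\gamma)$ prescribed by conditions 1--4 is precisely the one supported by $\pi^*$ at $b$. The only genuinely non-routine ingredient in either direction is the structural claim that every neighbour of $v$ already lies in $V_{b'}$, which is what guarantees that condition 2 captures all and only the edges whose contribution to $\alpha$ changes when $v$ is forgotten.
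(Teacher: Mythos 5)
Your proof is correct and follows essentially the same route as the paper's: both directions reuse the same supporting partition function (legitimate because $V_b = V_{b'}$) and verify the four bookkeeping conditions. In fact your treatment of condition 2 is slightly more careful than the paper's, since you explicitly invoke the tree-decomposition connectivity axiom to show that every neighbour of $v$ lies in $V_{b'}$, hence is either in $\mathcal{D}(b)$ or already forgotten at $b'$ --- the point that justifies restricting the correction term to $u \in \mathcal{D}(b)$, which the paper's prose leaves implicit.
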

\begin{proof}
    
    Suppose $(\pi,\alpha,\beta,\gamma)$ is a valid state for $b$ and let $\pi^*$ be a partition function supporting its validity.
    We will show that $\pi^*$ also supports the state $(\pi',\alpha',\beta',\gamma')$.
    Let $\pi'$ be the restriction of $\pi^*$ onto $\mathcal{D}(b')\times [T]$, then as $\mathcal{D}(b) \subset \mathcal{D}(b')$ we have that $\pi=\pi'| _{\mathcal{D}(b)\times [T]}$.

    For $p \in [c]$, $\alpha(p)$ is the count of all time-edges in part $p$ in $\pi^*$ that have at least one vertex in $V_b\setminus \mathcal{D}(b)$.
    The vertex $v$ is forgotten by $b$, hence the number of edges counted for each part by $\alpha$ includes all edges to $v$. 
    As $v$ is not forgotten by $b'$, for each part $p$ in $\pi^*$ the number of forgotten time-edges with at least one vertex in $V_{b'} \setminus \mathcal{D}(b')$ can be given by subtracting the number of edges within $p$ incident to $v$ across all times that $v$ is in $p$. So $\alpha'$ is the valid count corresponding to $\pi^*$ when $\alpha'(p) =\alpha(p) -|\{(u,t):uv \in E_t, u \in \mathcal{D}(b) \text{ and } (u,t),(v,t) \in p \}|$.
    
    For a given part $p$ and time $t$, $\beta(p,t)$ is the count of the degree for vertices in $V_b\setminus \mathcal{D}(b)$ in part $p$ at time $t$ in $\pi^*$. As the difference between the forgotten sets is exactly $\{v\}$, the degree of vertices in the set $V_{b'}\setminus \mathcal{D}(b')$ is found by subtracting the degree of $v$ at time $t$ if it was in part $p$ at that time. If $v$ is not in the part in the child vertex then the vertices in the part are unchanged and the degree is also. So $\beta'$ is the valid count corresponding to $\pi^*$ when $\beta(p,t) = \beta'(p,t) + d_{v,t} \text{ if } \pi'(v,t)=p$ and $\beta(p,t)=\beta'(p,t)$ otherwise.
    
    Similarly, $\gamma$ is the loyalty count of vertices in $V_b\setminus \mathcal{D}(b)$ in $\pi^*$, the loyalty count of vertices in $V_{b'}\setminus \mathcal{D}(b')$ is found by subtracting the loyalty count of $v$, that is the sum of all consecutive times where $v$ is in the same part. So $\gamma'$ is the accurate loyalty count corresponding to $\pi^*$ when $\gamma = \gamma' + |\{t \in [T-1]:\pi'(v,t)=\pi'(v,t+1)\}|$.
    Hence, if $(\pi,\alpha,\beta,\gamma)$ is a valid state so is $(\pi',\alpha',\beta',\gamma')$.

Suppose $(\pi',\alpha',\beta',\gamma')$ is a valid state for $b'$ such that our conditions hold and let $\pi'^*$ be a partition function supporting its validity.
We will show that $\pi'^*$ supports the state $(\pi,\alpha,\beta,\gamma)$.

We know from condition $1$ that $\pi=\pi'| _{\mathcal{D}(b)\times [T]}$ and from validity of the child that $\pi'^*|_{\mathcal{D}(b')\times [T]}=\pi'$ therefore as $\mathcal{D}(b) \subset \mathcal{D}(b')$ we have that $\pi'^*|_{\mathcal{D}(b)\times [T]}=\pi$, that is that $\pi$ is the valid partition function corresponding to $\pi^*$.

 As no new vertices are introduced between the child and parent bags, $V_b=V_{b'}$ and the set of vertices forgotten by $b$ is $V_b\setminus\mathcal{D}(b)=V_b\setminus\{\mathcal{D}(b')\cup \{v\}\}$.

From condition $2$ we have that $\alpha(p) =\alpha'(p) +|\{(u,t):uv \in E_t, u \in \mathcal{D}(b) \text{ and } (u,t),(v,t) \in p \}|$ and from validity of the child we know that for $p \in [c]$, $\alpha'(p)$ is the count of all time-edges in part $p$ in $\pi'^*$ that have at least one vertex in $V_{b'}\setminus\mathcal{D}(b')$. 
The difference in the number of time-edges with at least one vertex in $V_{b'}\setminus\mathcal{D}(b')$ is given by the number of time-edges forgotten in $p$ when $v$ is forgotten. That is exactly the number of edges incident to $v$ within $p$ in $\pi'$, which is $|\{(u,t):uv \in E_t, u \in \mathcal{D}(b) \text{ and } (u,t),(v,t) \in p \}|$.
Therefore, $\alpha$ is the accurate count of edges forgotten in each part corresponding to $\pi^*$.

From condition $3$ we have that $\beta(p,t) = \beta'(p,t) + d_{v,t} \text{ if } \pi'(v,t)=p$ and $\beta(p,t)=\beta'(p,t)$ otherwise and from validity of the child we know that for $p \in [c]$ and a time $t \in [T]$, $\beta'(p,t)$ is the count of the degree of vertices in $V_{b'}\setminus\mathcal{D}(b')$ in part $p$ at time $t$ in $\pi'^*$.
The difference in the degree of vertices in $\pi'^*$ in the set $V_{b'}\setminus\mathcal{D}(b')$ is given by the degree of vertices in this part and time forgotten between $b'$ and $b$, that is exactly the degree of $v$ if $v$ is in part $p$ at time $t$ in $\pi'$ and zero otherwise. Hence, $\beta$ is the accurate count of degrees of forgotten vertices in each part and time corresponding to $\pi^*$.

Finally, from condition $4$ we have that $\gamma = \gamma' + |\{t \in [T-1]:\pi'(v,t)=\pi'(v,t+1)\}|$ and from validity of the child we know $\gamma'$ is the loyalty contribution in $\pi'^*$ in $V_{b'}\setminus\mathcal{D}(b')$.
The change in loyalty count to the set $V_{b}\setminus\mathcal{D}(b)$ is the total loyalty contribution that $v$ gives, that is the count of all consecutive times that $v$ stays in the same part in $\pi'$. Hence $\gamma$ is the accurate loyalty count corresponding to $\pi^*$.
Therefore, if $(\pi',\alpha',\beta',\gamma')$ is a valid state so is $(\pi,\alpha,\beta,\gamma)$.  
\qed
\end{proof}

\begin{lemma} \label{lem:tw join}
    Let $b$ be a join node with children $b_1$ and $b_2$ such that $\mathcal{D}(b)=\mathcal{D}(b_1)=\mathcal{D}(b_2)$. Then $(\pi,\alpha,\beta,\gamma)$ is a valid state of $b$ if and only if there exist valid states $(\pi_1,\alpha_1,\beta_1,\gamma_1)$ of $b_1$ and $(\pi_2,\alpha_2,\beta_2,\gamma_2)$ of $b_2$ such that:
    \begin{enumerate}
        \item $\pi(v,t)=\pi_1(v,t)=\pi_2(v,t)$ for all $v\in \mathcal{D}(b)$ and $t\in [T]$,
        \item $\alpha(p)=\alpha_{1}(p)+\alpha_2(p)$ for all $p \in[c]$,
        \item $\beta(p,t)=\beta_1(p,t)+\beta_2(p,t)$ for all $p \in [c],t\in [T]$, and
        \item $\gamma=\gamma_1+\gamma_2$.
    \end{enumerate}
\end{lemma}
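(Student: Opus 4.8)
The plan is to follow the same template as the introduce and forget lemmas: prove both directions by passing between a single supporting partition function on $V_b \times [T]$ and a pair of supporting partition functions on $V_{b_1} \times [T]$ and $V_{b_2} \times [T]$. The entire argument rests on one structural fact about join nodes, so I would establish that first. Because $\mathcal{D}(b) = \mathcal{D}(b_1) = \mathcal{D}(b_2)$ and, by the connectivity property (Definition~\ref{def:tree_decomp}, item~3), every vertex forgotten strictly below $b_1$ occurs in no bag of the $b_2$-subtree and vice versa, we have $V_b = V_{b_1} \cup V_{b_2}$ with $V_{b_1} \cap V_{b_2} = \mathcal{D}(b)$. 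In particular the two forgotten sets $V_{b_1}\setminus\mathcal{D}(b)$ and $V_{b_2}\setminus\mathcal{D}(b)$ are disjoint and partition $V_b\setminus\mathcal{D}(b)$; moreover, again by connectivity, there is no edge of $G$ with one endpoint in each forgotten set, since such an edge would force both endpoints into a common bag, which is impossible as neither appears in the other's subtree.

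For the forward direction, suppose $(\pi,\alpha,\beta,\gamma)$ is valid at $b$ with supporting function $\pi^*$, and set $\pi_i^* = \pi^*|_{V_{b_i}\times[T]}$ for $i \in \{1,2\}$; I would then read off the states $(\pi_i,\alpha_i,\beta_i,\gamma_i)$ that $\pi_i^*$ supports and check the four identities. Condition~1 is immediate since all three bags coincide. The additivity of $\beta$ and $\gamma$ (conditions~3 and~4) is routine: degree sums and per-vertex loyalty counts are summed over the forgotten set, which is the disjoint union of the two child forgotten sets, so each forgotten vertex contributes to exactly one of $\beta_i$ or $\gamma_i$. The one identity requiring the separator property is condition~2: every time-edge counted by $\alpha(p)$ has a forgotten endpoint, and since no edge crosses between the two forgotten sets, both of its endpoints lie in a single $V_{b_i}$; hence it is counted by exactly one of $\alpha_1(p)$ or $\alpha_2(p)$, giving $\alpha = \alpha_1 + \alpha_2$ with no double counting.

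For the reverse direction, given valid child states satisfying conditions~1--4 with supporting functions $\pi_1^*$ and $\pi_2^*$, I would glue them into a single $\pi^*$ on $V_b\times[T]$ by setting $\pi^*(u,t) = \pi_i^*(u,t)$ whenever $u \in V_{b_i}$. This gluing is well defined precisely because $V_{b_1}\cap V_{b_2}=\mathcal{D}(b)$ and, by child validity together with condition~1, both functions restrict to $\pi$ on $\mathcal{D}(b)\times[T]$. It then remains to verify that $\pi^*$ supports $(\pi,\alpha,\beta,\gamma)$, which is the same bookkeeping as before run in reverse: the restriction to the bag is $\pi$, and the edge-, degree-, and loyalty-counts of $\pi^*$ over $V_b\setminus\mathcal{D}(b)$ decompose as the corresponding sums over the two disjoint child forgotten sets, matching $\alpha_1+\alpha_2$, $\beta_1+\beta_2$, and $\gamma_1+\gamma_2$ respectively.

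The main obstacle is really just making the separator claim precise and applying it correctly to the $\alpha$-count: one must rule out both edges crossing between the forgotten sets (which would be miscounted) and edges double-counted by $\alpha_1$ and $\alpha_2$. Both are handled by the observation that an edge incident to a forgotten vertex of $b_i$ has its other endpoint either in $\mathcal{D}(b)$ or forgotten by the same $b_i$, so the edge lies entirely within $V_{b_i}$. Everything else is straightforward additivity over the partitioned forgotten set.
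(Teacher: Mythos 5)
Your proposal is correct and follows essentially the same route as the paper's proof: both directions pass between a supporting partition function on $V_b\times[T]$ and its restrictions to (respectively a gluing over) $V_{b_1}\times[T]$ and $V_{b_2}\times[T]$, with conditions 1--4 verified by additivity over the disjoint forgotten sets. If anything, you are more explicit than the paper in justifying the key separator fact (no edge of $G$ joins the two forgotten sets, so no $\alpha$-counted time-edge is split or double-counted), which the paper simply asserts ``by the properties of tree decomposition.''
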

\begin{proof}
First note that, by the properties of tree decomposition, the set of vertices forgotten by each of the children are completely disjoint. The set of vertices forgotten by $b$ is the union of these disjoint sets.
    
Suppose $(\pi,\alpha,\beta,\gamma)$ is a valid state for $b$ and let $\pi^*$ be a partition function supporting its validity. 
We will show that there exists a state $(\pi_1,\alpha_1,\beta_1,\gamma_1)$ supported by $\pi^*_1:=\pi^*|_{V_{b1} \times [T]}$ and a state $(\pi_2,\alpha_2,\beta_2,\gamma_2)$ supported by $\pi^*_2:=\pi^*|_{V_{b2} \times [T]}$. 
    As $\mathcal{D}(b)$ is the same for all three nodes the restrictions of $\pi^*$, $\pi^*_1$ and $\pi^*_2$ to $\mathcal{D}(b) \times [T]$ are all equal. Hence, $\pi_1$ and $\pi_2$ are the partition functions corresponding to $\pi^*_1$ and $\pi^*_2$.
    
    For a part $p$, $\alpha(p)$ is the count of all time-edges in part $p$ in $\pi^*$ that have at least one vertex in $V_b\setminus \mathcal{D}(b)$. 
    No new vertices are forgotten between $b_1$, $b_2$ and $b$ so all forgotten edges must have been forgotten by $b_1$ or $b_2$. Let $\alpha_1$ count the edges that have been forgotten by $b_1$ and let $\alpha_2$ count the edges that have been forgotten by $b_2$.
    As these are disjoint, $\alpha_1$ and $\alpha_2$ are the accurate time-edge counts corresponding to $\pi^*_1$ and $\pi^*_2$ with $\alpha(p)=\alpha_{1}(p)+\alpha_2(p)$.
    
    For each part-time pair $(p,t)$ we have that $\beta(p,t)$ is the count of the degree of vertices in $V_b\setminus \mathcal{D}(b)$ in part $p$ at time $t$ in $\pi^*$.
    No vertices are forgotten by $b$ that were not already forgotten by one of $b_1$ or $b_2$. Let $\beta_1$ count the degrees of vertices forgotten by $b_1$ and let $\beta_2$ count the degrees of vertices forgotten by $b_2$.
    The vertices forgotten by $b_1$ and $b_2$ are disjoint; hence, $\beta_1$ and $\beta_2$ are the accurate degree counts corresponding to $\pi^*_1$ and $\pi^*_2$ with $\beta(p,t)=\beta_1(p,t)+\beta_2(p,t)$.
    
    Finally, $\gamma$ gives the loyalty of all vertices forgotten by $b$ in $\pi^*$. Let $\gamma_1$ be the loyalty of vertices forgotten by $b_1$ and $\gamma_2$ be the loyalty of vertices forgotten by $b_2$. Again we have that no new vertices are forgotten and no vertex can be forgotten by both $b_1$ and $b_2$.
    Therefore, $\gamma_1$ and $\gamma_2$ are the accurate loyalty counts corresponding to $\pi^*_1$ and $\pi^*_2$ with $\gamma=\gamma_1+\gamma_2$.

    Suppose $(\pi_1,\alpha_1,\beta_1,\gamma_1)$ is a valid state for $b_1$ and $(\pi_2,\alpha_2,\beta_2,\gamma_2)$ is a valid state for $b_2$ such that our conditions hold and let $\pi^*_1$ and $\pi^*_2$ be partition functions that support the respective states. We want to show that there is a partition function that supports $(\pi,\alpha,\beta,\gamma)$.

    From condition $1$ we have that $\pi(v,t)=\pi_1(v,t)=\pi_2(v,t)$ for all $v\in \mathcal{D}(b)$ and $t\in [T]$ and from validity of the children we know that $\pi^*_1|_{\mathcal{D}(b_1)\times [T]}=\pi_1$ and $\pi^*_2|_{\mathcal{D}(b_2)\times [T]}=\pi_2$.
    We define $\pi^*$ as the partition function given by $\pi^*(v,t)=\pi_1^*(v,t)$ if $v \in V_{b_1} $ and $\pi^*(v,t)=\pi_2^*(v,t)$ if $V_{b_2} \setminus \mathcal{D}(b)$. Let $\pi=\pi^*| _{\mathcal{D}(b)\times [T]}$. Then $\pi$ is the valid partition function corresponding to $\pi^*$.

    From condition $2$ we have that $\alpha(p)=\alpha_{1}(p)+\alpha_2(p)$ and from validity of the children we have that $\alpha_1(p)$ is the count of time edges in part $p$ in $\pi^*_1$ with at least one vertex in $V_{b_1} \setminus \mathcal{D}(b_1)$ and $\alpha_2(p)$ is count of time edges in part $p$ in $\pi^*_2$ with at least one vertex in $V_{b_2} \setminus \mathcal{D}(b_2)$.
By definition, $\pi^*$ is equal to $\pi^*_1$ on all vertices forgotten by $b_1$ and is equal to $\pi^*_2$ on all vertices forgotten by $b_2$. All vertices forgotten by $b$ are forgotten by exactly one of $b_1$ or $b_2$ and there are no edges between vertices forgotten by both $b_1$ and $b_2$. 
Hence, for a part $p$, the number of edges in part $p$ that have been forgotten is exactly the sum of those forgotten in the same part by $b_1$ and $b_2$.
Therefore, $\alpha$ is the valid count of forgotten time-edges corresponding to $\pi^*$.

    For any time and part pair $(p,t)$ we know from validity of the children that the degree of all vertices forgotten in part $p$ at time $t$ in $\pi^*_1$ by $b_1$ is $\beta_1(p,t)$ and the number forgotten in $\pi^*_2$ by $b_2$ is $\beta_2(p,t)$. Condition $3$ gives us that $\beta(p,t)=\beta_1(p,t)+\beta_2(p,t)$ for all $p \in [c],t\in [T]$. 
    By definition $\pi^*$ is equal to $\pi^*_1$ on all vertices forgotten by $b_1$ and is equal to $\pi^*_2$ on all vertices forgotten by $b_2$. All vertices forgotten by $b$ are forgotten by exactly one of $b_1$ or $b_2$ hence for a part $p$ and time $t$ the degree of vertices forgotten in part $p$ at time $t$ in $\pi^*$ by $b$ is exactly the sum of those forgotten in the same part and time by $b_1$ and $b_2$.
    Therefore, $\beta$ is the accurate count of forgotten degrees corresponding to $\pi^*$.
    
    Finally, from validity of the children we have that $\gamma_1$ and $\gamma_2$ are accurate loyalty counts for $\pi^*_1$ and $\pi^*_2$ respectively. Condition $4$ gives us that $\gamma=\gamma_1+\gamma_2$. Each vertex forgotten by $b$ in $\pi^*$ is forgotten by exactly one of $b_1$ in $\pi^*_1$ or $b_2$ in $\pi^*_2$ and $\pi^*$ matches $\pi^*_1$ and $\pi^*_2$ on these forgotten sets. Hence, the loyalty count of $b$ in $\pi^*$ is exactly the sum of loyalty counts for $b_1$ in $\pi^*_1$ and $b_2$ in $\pi^*_2$. Therefore $\gamma$ is the loyalty count corresponding to $\pi^*$.
    Hence $(\pi,\alpha,\beta,\gamma)$ is a valid state for $b$. \qed
\end{proof}

We also bound the time required to compute the maximum non-normalised temporal $c$-modularity of the graph given all valid states at the root.

\begin{lemma}\label{lem:tw root}
Let $\cG$ be a temporal graph with $n$ vertices, $m$ edges and lifetime $T$.
Given the set of all valid states at the root of a nice tree decomposition of $\cG$, the maximum non-normalised temporal $c$-modularity of $\cG$, $\widetilde{q}^{\,*}_c(\cG)$, can be found in time $O(2^{cT}m^{c(T+1)}T^{c+2}cn)$. 
\end{lemma}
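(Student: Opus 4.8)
The plan is to exploit the defining property of a nice tree decomposition that the root bag is empty, so that $V_{\mathrm{root}} = V$ and \emph{every} vertex of $\cG$ has been forgotten at the root. Consequently a valid state $(\pi,\alpha,\beta,\gamma)$ at the root has $\pi$ equal to the (trivial) empty partition function and is supported by some partition function $\pi^* : V \times [T] \to [c]$ of the entire graph into at most $c$ parts; conversely, by the validity conditions every such global partition determines, and hence supports, exactly one valid state at the root. The first step is therefore to record this correspondence and observe that the map sending a partition $\pi^*$ into at most $c$ parts to its associated root state is a well-defined surjection onto the set of valid states, so that maximising over valid states is equivalent to maximising over all partitions into at most $c$ parts, which is precisely what defines $\widetilde{q}^{\,*}_c(\cG)$.

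The second, and main, step is to show that $\widetilde{q}(\cG,\pi^*)$ can be read off directly from the triple $(\alpha,\beta,\gamma)$, with no further reference to $\pi^*$. Since all vertices are forgotten at the root, for each $p \in [c]$ the quantity $\alpha(p)$ counts every monochromatic time-edge of colour $p$, whence $\sum_{p=1}^c \alpha(p) = \sum_{t=1}^T \sum_{A \in \cA_t} e_{G_t}(A)$; similarly $\beta(p,t)$ equals the volume in $G_t$ of the set of vertices assigned to part $p$ at time $t$, and $\gamma = \loy(\pi^*)$. Substituting these into the definition of non-normalised temporal modularity gives the closed form
\begin{equation*}
    \widetilde{q}(\cG,\pi^*) = 2\sum_{p=1}^c \alpha(p) - \sum_{t=1}^T \frac{1}{2m_t}\sum_{p=1}^c \beta(p,t)^2 + \omega\gamma,
\end{equation*}
with the $t$-th volume term read as $0$ whenever $m_t = 0$. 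Taking the maximum of this expression over all valid states at the root then yields $\widetilde{q}^{\,*}_c(\cG)$, by the correspondence established in the first step.

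For the running time I would bound the number of valid states at the root by specialising the count of \cref{lem:state_count} to an empty bag: the partition-function component now contributes only a single choice rather than the $c^{(\w+1)T}$ factor, leaving $(mT+1)^c$ choices for $\alpha$, $(2m+1)^{cT}$ for $\beta$, and $n(T-1)+1$ for $\gamma$, i.e.\ $O(2^{cT}m^{c(T+1)}T^{c+1}n)$ valid states in total. Evaluating the displayed formula for one state costs $O(cT)$ arithmetic operations in the Word-RAM model (a single pass over the $c$ entries of $\alpha$ and the $cT$ entries of $\beta$), and maintaining a running maximum adds only constant overhead per state. Multiplying the number of states by this per-state cost gives exactly the claimed bound $O(2^{cT}m^{c(T+1)}T^{c+2}cn)$.

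The step I expect to need the most care is verifying that the modularity value is genuinely a function of $(\alpha,\beta,\gamma)$ alone. Many distinct global partitions may support the same root state, but since each of the three contributions to $\widetilde{q}$ is determined entirely by these counts, all such partitions share the same non-normalised modularity, so maximising over states rather than over partitions loses nothing. One must also confirm that the quadratic volume term is correctly recovered as $\sum_p \beta(p,t)^2$ with no cross terms, which holds because the parts at a fixed time are pairwise disjoint and their volumes sum to $2m_t$ independently across parts.
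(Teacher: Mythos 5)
Your proposal is correct and follows essentially the same route as the paper's own proof: identify valid root states with global partitions into at most $c$ parts, observe that the non-normalised modularity is a function of $(\alpha,\beta,\gamma)$ alone via the closed form $\sum_p\bigl(2\alpha(p)-\sum_t \beta(p,t)^2/(2m_t)\bigr)+\omega\gamma$, and multiply the state count by the $O(cT)$ per-state evaluation cost. Your added remarks on the $m_t=0$ convention and the absence of cross terms in the volume sum are fine points of care, not a departure from the paper's argument.
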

\begin{proof}
We claim that
\begin{equation}\label{eqn:root_lemma}
\widetilde{q}^{\,*}_c(\cG)=\max_{\text{valid state at $b$ }(\pi,\alpha,\beta,\gamma)} \left(\sum_{p \in [c]} \left( 2\alpha(p) -\sum_{t=1}^T  \frac{\beta(p,t)^2}{2m_t}\right) + \omega \gamma\right).
\end{equation}
First we want to show that 
\[\widetilde{q}^{\,*}_c(\cG)\geq\max_{\text{valid state at $b$ }(\pi,\alpha,\beta,\gamma)} \left(\sum_{p \in [c]} \left( 2\alpha(p) -\sum_{t=1}^T  \frac{\beta(p,t)^2}{2m_t}\right) + \omega \gamma\right).\]
To do this we consider the valid state $(\pi,\alpha,\beta,\gamma)$ where the maximum is obtained. As the state is valid, it is supported by a partition function $\pi^*$ on all forgotten vertices. We know at the empty root node that all vertices have been forgotten, hence $\pi^*$ is a partition function on all vertices on the graph. 
We show that the sum we maximise in Equation \ref{eqn:root_lemma} is therefore the modularity of the partition given by $\pi^*$. 
By validity $\alpha$ exactly counts the edges within each part. This means for each part $p$ we have that $\alpha(p)=\sum_{t=1}^Te_{G_t}(p)$. Therefore, we have $\sum_{p \in [c]}2\alpha(p)=\sum_{p\in[c]} 2\sum_{t=1}^T e_{G_t}(p)=\sum_{t=1}^T \sum_{p \in [c]} 2e_{G_t}(p))$.
    We know from validity that $\beta$ exactly counts the degree in each part and time. This means for any part $p$ and time $t$ $\beta(p,t)=\vol_{G_t}(p)$.
    Similarly $\gamma$ is exactly the loyalty count, so $\gamma=\loy(\pi^*)$. 
\begin{align*}
\left(\sum_{p \in [c]} \left( 2\alpha(p) -\sum_{t=1}^T  \frac{\beta(p,t)^2}{2m_t}\right) + \omega \gamma\right)\\
 =\sum_{t=1}^T \sum_{p \in [c]} \left( 2e_{G_t}(p) - \frac{\text{vol}_{G_t}(p)^2}{2m_t}\right) + \omega \loy(\pi^*)\\
=\widetilde{q_c}(\cG,\pi^*) \leq \widetilde{q}^{\,*}_c.
\end{align*}

Next we show that
\[\widetilde{q}^{\,*}_c(\cG)\leq\max_{\text{valid state at $b$ }(\pi,\alpha,\beta,\gamma)} \left(\sum_{p \in [c]} \left( 2\alpha(p) -\sum_{t=1}^T  \frac{\beta(p,t)^2}{2m_t}\right) + \omega \gamma\right)\]
We know that at each step every possible combination of functions a state could take is considered and accepted as valid if it is supported by some partition function $\pi^*$ on the forgotten vertices. This means that, at a given node, every possible partition function on the forgotten vertices has a corresponding valid state. Therefore, at the root node where all vertices are forgotten every possible partition function on the temporal graph has a corresponding valid state.

Consider a valid state at the root $(\pi,\alpha,\beta,\gamma)$.
With a supporting partition function $\pi^*$ acting on all the vertices of the graph.
We start from the definition of non-normalised temporal $c$-modularity on the partition $\pi^*$:
    \[
        \widetilde{q}_\omega(\cG,\pi^*) = \sum_{t=1}^T \sum_{p \in [c]} \left( 2e_{G_t}(p) - \frac{\text{vol}_{G_t}(p)^2}{2m_t}\right) + \omega \loy(\pi^*)
    \]
    By validity $\alpha$ exactly counts the edges within each part. This means for each part $p$ we have that $\alpha(p)=\sum_{t=1}^Te_{G_t}(p)$. Therefore, we have $\sum_{t=1}^T \sum_{p \in [c]} 2e_{G_t}(p))=\sum_{p\in[c]} 2\sum_{t=1}^T e_{G_t}(p)$$=\sum_{p \in [c]}2\alpha(p)$.
    We know from validity that $\beta$ exactly counts the degree in each part and time. This means for any part $p$ and time $t$ $\beta(p,t)=\vol_{G_t}(p)$.
    Similarly $\gamma$ is exactly the loyalty count, so $\gamma=\loy(\pi^*)$.
    This gives non-normalised temporal modularity of the partition function $\pi^*$ as:
    \[
        \widetilde{q}_\omega(\cG,\pi^*) = \sum_{p \in [c]} \left( 2\alpha(p) -\sum_{t=1}^T  \frac{\beta(p,t)^2}{2m_t}\right) + \omega \gamma
    \]
The non-normalised temporal modularity of a partition can be clearly calculated given a valid state, as we are considering all possible partitions this will include the maximum which is equal to $\widetilde{q}^{\,*}_c(\cG)$.
    
For each state the non-normalised temporal modularity of a supporting partition can be calculated in time $\mathcal{O}(cT)$.
To find the non-normalised temporal $c$-modularity of the graph we compare the calculated modularity of all root states. All the vertices have been forgotten so the partition function $\pi$ of the valid state is always empty, therefore there are $(mT+1)^c(2m+1)^{cT}(nT+1)$ possible valid states. Hence at the root the maximum non-normalised temporal $c$-modularity can be found in $\mathcal{O}(2^{cT}m^{c(T+1)}T^{c+2}cn)$.
\qed
\end{proof}

We now have everything we need to prove Theorem \ref{thm:c-modularity}.

\begin{proof}[Proof of Theorem~\ref{thm:c-modularity}]
    By Lemma \ref{lem:state_count} the maximum number of states at any node is $\mathcal{O}(2^{cT}c^{(\w +1)T}m^{c(T+1)}T^{c+1}n)$.

    We assume that all our leaf nodes are empty and from Lemma \ref{lem:tw leaf} the states at the leaf nodes are valid only when the partition is empty and the other functions are zero which can checked in constant time. To compute the valid states at the root we move up the graph starting at the leaf nodes until we reach the root where we will use Lemma \ref{lem:tw root} to calculate the non-normalised temporal $c$-modularity. To do this we consider our three types of parent child relationship and the time needed to find all valid states for the parent in each case.

    First we consider an introduce node.
    Considering a state $(\pi,\alpha,\beta,\gamma)$ of the parent and a valid state $(\pi',\alpha',\beta',\gamma')$ of the child, by Lemma \ref{lem:tw intro} we can check the validity of the parent by checking the equality of each of our functions.
    For the partition function we have to check that, for each time $t\leq T$ and vertex $u\neq v$ in the bag, $\pi(u,t)=\pi'(u,t)$, which takes at most $(\w +1)T$ checks with each check taking constant time. 
    For $\alpha$ we have to check that, for each part $p\in[c]$, we have $\alpha(p)=\alpha'(p)$, which takes $c$ constant time checks.
    For $\beta$ we have to check that, for each part $p\in[c]$ and time $t\leq T$, we have $\beta(p,t)=\beta'(p,t)$, which takes $cT$ constant time checks.
    Finally, we have a single constant time check to see that $\gamma=\gamma'$.
    Therefore, checking the validity of a parent state can be done in time $\mathcal{O}(T(\w +c))$.
    We want to check each possible pair of a parent and a child state, that is $n^{\mathcal{O}(cT)}c^{\mathcal{O}(\w T)}T^{\mathcal{O}(c)}$.

    Next we look at the time taken to check a forget node. Considering a state $(\pi,\alpha,\beta,\gamma)$ for the parent and a valid state $(\pi',\alpha',\beta',\gamma')$ for the child, by  Lemma~\ref{lem:tw forget} we can check the validity by checking the relationship of each of our functions.
    For the partition function, we have to check for each time $t\in [T]$ and each vertex $u \neq v$ in the bag that we have $\pi(u,t)=\pi'(u,t)$, which takes $\w T$ constant time checks. 
    For $\alpha$, for each of the parts $p\in[c]$ we count the number of edges from $v$ in the part in the child and then check this against the difference in our $\alpha$ values, which can be done in time $\mathcal{O}(c\w ^2T)$.
    For $\beta$, for each part $p\in[c]$ and time $t\in[T]$ we count the degree of $v$ in part $p$ at time $t$ in the child then check this is equal to $\beta(p,t)-\beta'(p,t)$, which can be done in time $\mathcal{O}(cT\w )$.
    Finally, for $\gamma$ we count the loyalty of the vertex $v$ and compare the difference in our loyalty scores to this value, which can be done in time $\mathcal{O}(T)$.
    Therefore checking the validity of a parent state can be done in time $\mathcal{O}(c\w ^2T)$. 
    We need to check all possible combinations of parent and child states, and there are $O\big((2^{cT}c^{(\w +1)T}m^{c(T+1)}T^{c+1}n)^2\big)$
     such combinations. This gives us total time $n^{\mathcal{O}(cT)}c^{\mathcal{O}(\w T)}T^{\mathcal{O}(c)}$.

    Finally, we consider the time needed to check that a join node is valid. Consider a parent state ($\pi,\alpha,\beta,\gamma)$ and valid states ($\pi_1,\alpha_1,\beta_1,\gamma_1)$ and ($\pi_2,\alpha_2,\beta_2,\gamma_2)$ for the two children. From Lemma \ref{lem:tw join} we know the relationship needed between each of the four functions for the parent state to be valid, we check these relationships.
    First, to check the partition function compatibility, for each vertex $v$ in the bag and time $t$ we check that $\pi(v,t)=\pi_1(v,t)=\pi_2(v,t)$ which takes time $\mathcal{O}(\w+1)T$ checks.
    Then, for each part $p \in [c]$ we check that $\alpha(p)=\alpha_1(p)+\alpha_2(p)$ which takes time $\mathcal{O}(c)$.
    Similarly, for each part $p\in[c]$ and time $t\in[T]$ we check the value of $\beta(p,t)$ by comparing it to the sum of $\beta_1(p,t)$ and $\beta_2(p,t)$ which takes time $\mathcal{O}(cT)$.
    Finally, we check the loyalty $\gamma$ by comparing it to the sum of $\gamma_1$ and $\gamma_2$, which is a single constant time check.
    Therefore, checking the validity of the parent takes time $\mathcal{O}(T(\w+c))$.
    We need to check all possible combinations of two children and one parent state, and there are $O\big((2^{cT}c^{(\w+1)T}m^{c(T+1)}T^{c+1}n)^3\big)$ such combinations. This gives us total time $n^{\mathcal{O}(cT)}c^{\mathcal{O}(\w T)}T^{\mathcal{O}(c)}$.

Considering all possible types of node, we see that, for any node, given the valid states of its children, we can compute the valid states at the node in time $n^{\mathcal{O}(cT)}c^{\mathcal{O}(\w T)}T^{\mathcal{O}(c)}$.  We can assume any nice tree decomposition has at most $\mathcal{O}(\w n)$ nodes\cite{cygan2015Algorithms} so using our worst case complexity for each of these nodes, we can calculate all valid states at the root in $n^{\mathcal{O}(cT)}c^{\mathcal{O}(\w T)}T^{\mathcal{O}(c)}$.  From Lemma \ref{lem:tw root} the non-normalised temporal $c$-modularity at the root can be calculated in time $\mathcal{O}(2^{cT}m^{c(T+1)}T^{c+2}cn) \in n^{\mathcal{O}(cT)}T^{\mathcal{O}(c)}c$.  Hence, the non-normalised temporal $c$-modularity for a graph of bounded tree width can be calculated in time $n^{\mathcal{O}(cT)}c^{\mathcal{O}(\w T)}T^{\mathcal{O}(c)}$.\qed
\end{proof} 
We are now ready to prove Theorem~\ref{thm:tw-approx}.  The strategy is to apply the algorithm of Theorem~\ref{thm:c-modularity} to the restriction of our input graph to constant-length time intervals.  We use a second simple dynamic program to determine the best set of short intervals to use for this, recalling from Lemma~\ref{lem:interval-approx-lb} that there exists at least choice that will result in a good approximation.
\begin{proof}[Proof of Theorem~\ref{thm:tw-approx}]
Given any temporal graph $\cG$ with lifetime $T$, let us define $\widetilde{q}^{\,*}_c(\cG,d)$ to be
$$ \max_{\substack{0=t_0 < t_1 < \cdots < t_{\ell} = T \\ t_i - t_{i-1} \leq d \text{ for all } 1 \leq i \leq \ell}} \sum_{i=1}^{\ell} \widetilde{q}^{\,*}_c(\cG_{[t_{i-1}+1,t_i]}). $$
Thus, $\widetilde{q}^{\,*}_c(\cG,d)$ is the maximum possible sum of temporal $c$-modularities we can get from splitting the temporal graph into intervals of length at most $d$ and maximising temporal modularity independently on each interval. 
 By Lemmas \ref{lem:k_part_approx_bound}, \ref{lem:interval-approx-ub} and \ref{lem:interval-approx-lb}, we see that $\widetilde{q}^{\,*}_c(\cG,d)$ is a $(1-(\frac{1}{c}+\frac{2}{d}))$-approximation to $\widetilde{q}^{\,*}(\cG)$.  Given this approximation, we can easily compute a $(1-(\frac{1}{c}+\frac{2}{d}))$-approximation to $q^*(\cG)$ in time $O(mT)$, simply multiplying by the normalisation factor. It therefore suffices to show that we can compute $\widetilde{q}^{\,*}_c(\cG,d)$ in time $T^2n^{\mathcal{O}(cd)}c^{\mathcal{O}(\w d)}d^{\mathcal{O}(c)}$.
 
    We achieve this by dynamic programming.  Our goal is to compute a value $DP[s_1,s_2]$ for $1 \leq s_1 \leq s_2 \leq T$, such that 
    \begin{equation}
    \left(1-(\frac{1}{c}+\frac{2}{d})\right) \widetilde{q}_c^*(\cG_{[s_1,s_2]},d) \leq DP[s_1,s_2] \leq \widetilde{q}_c^*(\cG_{[s_1,s_2]},d).
    \label{eqn:DP-approx-bounds}
    \end{equation}
    We can then return the value of $DP[1,T]$ as our approximation to $\widetilde{q}_c^*(\cG,d)$.  We compute the table values as follows.
    We begin by initialising all table entries $DP[\cG,s_1,s_2]$ such that $s_2 - s_1 < d$ by running the algorithm of Theorem~\ref{thm:c-modularity} with input $\cG_{[s_1,s_2]}$ and recording the result.
    This follows from the fact that the intervals are smaller than $d$, so $\widetilde{q}^{\,*}_c(\cG_{[s_1,s_2]},d)=\widetilde{q}^{\,*}_c(\cG_{[s_1,s_2]})$.
    The remaining entries in the table, where $s_2 - s_1 \geq d$, are computed using the recurrence
    $$DP[s_1,s_2] = \max_{1 \leq i \leq d-1} \{ DP[s_1,s_1 + i] + DP[s_1+i+1,s_2]\}.$$

    \noindent
    \textbf{Correctness}: We argue, by induction on $s_2 - s_1$, that Equation \eqref{eqn:DP-approx-bounds} holds for all table entries.  For the base case, note that for entries with $s_2 - s_2 < d$ this follows immediately from the fact that the algorithm of Theorem $\ref{thm:c-modularity}$ gives an $ \left(1-(\frac{1}{c}+\frac{2}{d})\right)$-approximation to the non-normalised maximum temporal modularity.  Suppose now that $s_2 - s_1 \geq d$.  It follows immediately from the inductive hypothesis that $DP[s_1,s_2] \leq \widetilde{q}^{\,*}(\cG_{[s_1,s_2]},d)$, so it remains only to show the lower bound on $DP[s_1,s_2]$.  For this, fix a sequence $s_1-1=t_0 < t_1 < \dots < t_\ell = s_2$ such that $\widetilde{q}^{\,*}(\cG_{[s_1,s_2]},d) = \sum_{j=1}^\ell \widetilde{q}^{\,*}(\cG_{[t_{j-1}+1,t_j]})$ and $t_j - t_{j-1} \leq d$ for all $1 \leq j \leq \ell$.  Fix $i = t_1 - s_1 - 1$.  Then we can write 
    \begin{align*}
        \widetilde{q}^{\,*}(\cG_{[s_1,s_2]},d) &= \widetilde{q}^{\,*}(\cG_{[s_1,s_1+i]},d) + \sum_{j=1}^\ell \widetilde{q}^{\,*}(\cG_{[t_{j-1}+1,t_j]}) \\
        & \leq \widetilde{q}^{\,*}(\cG_{[s_1,s_1+i]},d) + \widetilde{q}^{\,*}(\cG_{[s_1+i+1,s_2]},d) \\
        & \leq \left(1-(\frac{1}{c}+\frac{2}{d})\right)^{-1} DP[s_1,s_1+i] + \left(1-(\frac{1}{c}+\frac{2}{d})\right)^{-1}DP[s_1+i+1,s_2],
    \end{align*}
    by the inductive hypothesis.  Rearranging gives the required lower bound.

    \vspace{6pt}
    \noindent
    \textbf{Running time:} We first consider the time required to compute the entries $DP[s_1,s_2]$ with $s_2-s_1<d$.  There are $\mathcal{O}(dT)$ such entries. For each we invoke the algorithm of Theorem \ref{thm:c-modularity}, which will run in time bounded by $n^{\mathcal{O}(cd)}c^{\mathcal{O}(\w d)}d^{\mathcal{O}(c)}$. Thus the time required for this initialisation phase is $\mathcal{O}(dT)\cdot n^{\mathcal{O}(cd)}c^{\mathcal{O}(\w d)}d^{\mathcal{O}(c)}$ which is bounded above by $Tn^{\mathcal{O}(cd)}c^{\mathcal{O}(\w d)}d^{\mathcal{O}(c)}$.  For each of the remaining entries, we take the minimum over $\mathcal{O}(d)$ sums of pairs of entries that have already been computed; computing each such entry takes time $\mathcal{O}(d)$.  Since there are in total $\mathcal{O}(T^2)$ entries in the table, this second phase requires time $\mathcal{O}(dT^2)$.  The total running time of the algorithm is at most $T^2n^{\mathcal{O}(cd)}c^{\mathcal{O}(\w d)}d^{\mathcal{O}(c)}$, as required.
    \qed
\end{proof}

\section{Conclusions and open problems}\label{sec:conclusion}

To our best knowledge, the algorithm described here is the first with performance guarantees for computing temporal modularity.  Given that temporal versions of problems that are tractable in the static case often become intractable even when the underlying graph is very severely restricted (e.g. even for paths \cite{mertziosMatchings23} or stars \cite{akridaStarExp21}), it is somewhat surprising that here we are able to generalise one approach from the static case, at the expense of a worse approximation factor.

A key open question is whether any of the exact parameterised algorithms for computing static modularity can be generalised to the temporal setting.  The vertex cover number of the underlying graph is a natural first candidate; note that this is strictly more restrictive than our treewidth-based approach, as vertex cover number bounds treewidth. 

The approach using vertex cover number in the static setting \cite{w1hard} is unlikely to be fruitful here without new ideas: it relies on partitioning vertices outside of a vertex cover into a small number of sets with \emph{identical} neighbourhoods; here, without also restricting the lifetime of the temporal graph, it is not possible to bound the number of different distinct neighbourhoods in terms of the vertex cover number.  A more fruitful approach might be to consider parameterisation by a temporal analogue of this parameter, the \emph{timed} vertex cover number \cite{casteigtsWaiting21}.

\begin{credits}
\subsubsection{\ackname}
 Jessica Enright and Kitty Meeks are supported by EPSRC grant EP/T004878/1. For the purpose of open access, the author(s) has applied a Creative Commons Attribution (CC BY) licence to any Author Accepted Manuscript version arising from this submission. 

\subsubsection{\discintname}
The authors have no competing interests to declare. 
\end{credits}

\newpage

{
%
%
%
\bibliographystyle{splncs04}
\bibliography{articles}
}

\end{document}